\newcommand{\XYMATRIX}{\xymatrix@M=6pt}
\newcommand{\aremb}{\ar@{^{(}->}}
\newcommand{\arembfrom}{\ar@{<-^{)}}}
\numberwithin{equation}{section}
  \newtheorem{THM}{Theorem}[section]
  \newtheorem{COR}[THM]{Corollary}
\newif\ifQEDsign
\newcommand{\QED}{\global\QEDsigntrue\hfill$\square$}
\newenvironment{proof}%
    {\par\noindent\textit{Proof.}\global\QEDsignfalse}%
    {\ifQEDsign\else\QED\fi\par\bigskip\par}
\renewcommand{\preceq}{\preccurlyeq}
\newcommand{\lex}{\mathrel{<_{\mathit{lex}}}}
\newcommand{\alex}{\mathrel{<_{\mathit{alex}}}}
\newcommand{\clex}{\mathrel{<_{\overline{\mathit{lex}}}}}
\renewcommand{\le}{\leqslant}
\renewcommand{\ge}{\geqslant}
\newcommand{\0}{\varnothing}
\renewcommand{\phi}{\varphi}
\renewcommand{\epsilon}{\varepsilon}
\newcommand{\UNION}{\bigcup}
\newcommand{\DD}{\mathbf{D}}
\newcommand{\KK}{\mathbf{K}}
\newcommand{\LL}{\mathbf{L}}
\newcommand{\NN}{\mathbb{N}}
\newcommand{\PP}{\mathbf{P}}
\newcommand{\SSS}{\mathbf{S}}
\newcommand{\VV}{\mathbf{V}}
\newcommand{\WW}{\mathbf{W}}
\newcommand{\XX}{\mathbf{X}}
\newcommand{\union}{\cup}
\newcommand{\restr}[2]{\hbox{$#1$}\hbox{$\upharpoonright$}_{#2}}
\newcommand{\Boxed}[1]{\mbox{$#1$}}
\newcommand{\embedsto}{\hookrightarrow}
\newcommand{\id}{\mathrm{id}}
\newcommand{\LO}{\mathrm{LO}}
\newcommand{\Sym}{\mathrm{Sym}}
\newcommand{\fin}{\mathit{fin}}
\newcommand{\im}{\mathrm{im}}
\newcommand{\Age}{\mathrm{Age}}
\newcommand{\OPP}{\overrightarrow{\PP}}
\newcommand{\calA}{\mathcal{A}}
\newcommand{\calB}{\mathcal{B}}
\newcommand{\calC}{\mathcal{C}}
\newcommand{\calD}{\mathcal{D}}
\newcommand{\calE}{\mathcal{E}}
\newcommand{\calF}{\mathcal{F}}
\newcommand{\calL}{\mathcal{L}}
\newcommand{\calP}{\mathcal{P}}
\newcommand{\calQ}{\mathcal{Q}}
\newcommand{\calS}{\mathcal{S}}
\newcommand{\calT}{\mathcal{T}}
\newcommand{\calX}{\mathcal{X}}
\newcommand{\Fraisse}{Fra\"\i ss\'e}
\DeclareMathOperator{\Aut}{Aut}
\DeclareMathOperator{\rel}{rel}
\title{The Ramsey and the ordering property for classes of lattices and semilattices}
\author{%
  Dragan Ma\v sulovi\'c\\
  University of Novi Sad, Faculty of Sciences\\
  Department of Mathematics and Informatics\\
  Trg Dositeja Obradovi\'ca 3, 21000 Novi Sad, Serbia\\
  e-mail: dragan.masulovic@dmi.uns.ac.rs}
\begin{document}
\maketitle

\begin{abstract}
  The class of finite distributive lattices, as many other classes of structures,
  does not have the Ramsey property. It is quite common,
  though, that after expanding the structures with appropriately chosen linear orders
  the resulting class has the Ramsey property. So, one might expect
  that a similar result holds for the class of all finite distributive lattices.
  Surprisingly, Kechris and Soki\'c have proved in 2012 that this is not the case:
  no expansion of the class of finite distributive lattices by linear orders satisfies the
  Ramsey property.

  In this paper we prove that the variety of distributive lattices is not an exception, but an instance of a more general phenomenon.
  We show that for almost all nontrivial locally finite varieties of lattices no ``reasonable'' expansion of the finite members
  of the variety by linear orders gives rise to a Ramsey class. The responsibility for this
  lies not with the lattices as structures, but with the lack of algebraic morphisms:
  if we consider lattices as partially ordered sets (and thus switch from algebraic embeddings
  to embeddings of relational structures) we show that every variety of lattices
  gives rise to a class of linearly ordered posets having both the Ramsey property and
  the ordering property. It now comes as no surprise that the same is true for varieties of semilattices.
  
  \bigskip

  \noindent \textbf{Key Words:} Ramsey property, ordering property, varieties of lattices, varieties of semilattices

  \bigskip

  \noindent \textbf{AMS Subj.\ Classification (2010):} 05C55, 06B20
\end{abstract}

\section{Introduction}

Generalizing the classical results of F.~P.~Ramsey from the late 1920's, the structural Ramsey theory originated at
the beginning of 1970’s in a series of papers (see \cite{N1995} for references).
We say that a class $\KK$ of finite structures has the \emph{Ramsey property} if the following holds:
for any number $k \ge 2$ of colors and all $\calA, \calB \in \KK$ such that $\calA$ embeds into $\calB$
there is a $\calC \in \KK$
such that no matter how we color the copies of $\calA$ in $\calC$ with $k$ colors, there is a \emph{monochromatic} copy
$\calB'$ of $\calB$ in $\calC$ (that is, all the copies of $\calA$ that fall within $\calB'$ are colored by the same color).

Many natural classes of structures (such as finite graphs, metric spaces and partially ordered sets, just to
name a few) do not have the Ramsey property, and lattices as algebras with two binary operations
satisfying certain algebraic laws are not an exception:
the class of all finite lattices, the class of all finite distributive lattices and the class of all finite
modular lattices do not have the Ramsey property (see~\cite{Nesetril-Rodl-lattices,Promel-Voigt}).
This is not surprising as all these classes contain non-rigid structures, and it has been established
relatively recently that a necessary condition for a class of finite structures to have the Ramsey property is that all its elements
be rigid (that is, have trivial automorphism groups)~\cite{Nesetril,masul-scow}.

It is quite common, though, that after expanding the structures under consideration
with appropriately chosen linear orders, the resulting class of expanded
structures has the Ramsey property. For example, the class of all finite
linearly ordered graphs $(V, E, \Boxed<)$ where $(V, E)$ is a finite graph and $<$ is
a linear order on the set $V$ of vertices of the graph has the Ramsey property~\cite{AH,Nesetril-Rodl}.
The same is true for metric spaces~\cite{Nesetril-metric}.
In case of finite posets we consider the class of all
finite linearly ordered posets $(P, \Boxed\preceq, \Boxed<)$ where $(P, \Boxed\preceq)$ is
a finite poset and $<$ is a linear order on $P$ which extends~$\preceq$~\cite{PTW,fouche}.
Moreover, in~\cite{sokic-semilat} several classes of semilattices
have been shown to have the Ramsey property if the semilattices in the class
are expanded by appropriate linear orders.
So, one might expect that a similar result holds for finite lattices.
Surprisingly, this is not the case. In~\cite{kechris-sokic} the authors prove that
no expansion of the class of finite distributive lattices by linear orders satisfies the
Ramsey property.

We start Section~\ref{wtc.sec.slat} by showing that
the variety of distributive lattices is not an exception, but an instance of a more general phenomenon.
We show that for an arbitrary nontrivial locally finite variety $\VV$
of lattices distinct from the variety of all the lattices and the variety of distributive lattices,
no ``reasonable'' expansion of $\VV^\fin$ ($=$ the class of all the finite lattices in $\VV$) by linear orders
has the Ramsey property. So, it seems that lattices are simply not fit for the Ramsey property.
Our main goal in Section~\ref{wtc.sec.slat} is to demonstrate that the responsibility for this
lies not with the lattices as structures, but with the lack of algebraic morphisms:
if we consider lattices as partially ordered sets (and thus switch from algebraic embeddings
to embeddings of relational structures) we show that every variety of lattices
gives rise to a class of linearly ordered posets having both the Ramsey property and
the ordering property (a property related to the Ramsey property which we define in Section~\ref{wtc.sec.prelim}
along with other notions we use in the paper).
Namely, there are much more embeddings between two lattices understood as relational structures than
there are embeddings between the same two lattices understood as algebras, and
this abundance of embeddings between relational structures is the key reason we are able to prove that
all the varieties of lattices have the Ramsey property in their ``relational alter ego''.
It now comes as no surprise that the same is true for varieties of semilattices.

In many particular cases the ordering property is implied by nontrivial Ramsey properties~\cite{Nesetril}, and
one such particular case is demonstrated in Section~\ref{wtc.sec.slat}.
Using the standard Sierpinski-style coloring obtained by comparing two linear orders
we can derive the ordering property for a class of linearly ordered posets from the fact that it has the Ramsey property.
In Section~\ref{wtc.sec.wtc} we generalize this idea to arbitrary classes of
first-order structures which satisfy a model-theoretic requirement we refer to as the \emph{weak triangle condition}.
This is a weaker form of the \emph{triangle condition} introduced in~\cite{KPT} in connection to understanding the consequences of the
ordering property. As an example we apply the main result of Section~\ref{wtc.sec.wtc} to show the ordering property for a class of
finite structures consisting of a set together with several partially ordered sets that conform to a given template.
This example is instructive since we do not see an easy way to derive the ordering property for this class directly.

We conclude the paper by a discussion of the importance of the Ramsey and the ordering properties
in the context of the Kechris-Pestov-Todor\v cevi\'c correspondence~\cite{KPT}, an intricate interplay of
discrete mathematics, model theory and topological dynamics. As the final example
we present a new infinite family of topological groups whose universal minimal flows can be computed using
this correspondence.

\section{Preliminaries}
\label{wtc.sec.prelim}

\paragraph{First-order structures.}
Let $\Theta = \Theta_R \union \Theta_F$ be a first-order language 
where $\Theta_R$ is a set of finitary relational symbols, and $\Theta_F$ is a set of finitary functional symbols.
Whereas we do not allow relational symbols of arity~0, functional symbols of arity~0 are welcome and correspond to
constants. A \emph{$\Theta$-structure} $\calA = (A, \Theta^\calA)$
is a set $A$ together with a set $\Theta^\calA$ of finitary relations on $A$ and finitary functions on $A$
which are the interpretations of the corresponding symbols in $\Theta$.
A \emph{relational language} is a first order language $\Theta$ where $\Theta_F = \0$.
An \emph{algebraic language} is a first order language $\Theta$ where $\Theta_R = \0$.
If $\Theta$ is a relational language, $\Theta$-structures are then usually referred to as
\emph{$\Theta$-relational structures}; and if $\Theta$ is an algebraic language,
$\Theta$-structures are then usually referred to as \emph{$\Theta$-algebras}.

Structures will be denoted by script letters $\calA$, $\calB_1$, $\calC^*$, \ldots, and
the underlying set of a structure $\calA$, $\calB_1$, $\calC^*$, \ldots\ will always be denoted by its roman
letter $A$, $B_1$, $C^*$, \ldots\ respectively. A structure $\calA$ is \emph{finite (countably infinite)}
if $A$ is a finite (countably infinite) set.
For a class $\KK$ of structures, by $\KK^\fin$ we denote the class of all the finite structures in~$\KK$.

An \emph{embedding} $f: \calA \hookrightarrow \calB$ between two $\Theta$-structures
is every injective map $f: A \rightarrow B$ satisfying the following:
\begin{itemize}
\item
  for every $\theta \in \Theta_R$ we have that
  $(a_1, \ldots, a_r) \in \theta^\calA \Leftrightarrow (f(a_1), \ldots, f(a_r)) \in \theta^\calB$,
  where $r$ is the arity of~$\theta$; and
\item
  for every $\phi \in \Theta_F$ we have that
  $f(\phi^\calA(a_1, \ldots, a_r)) = \phi^\calB(f(a_1), \ldots, f(a_r))$,
  where $r$ is the arity of~$\phi$.
\end{itemize}
Surjective embeddings are \emph{isomorphisms}. Structures $\calA$ and $\calB$ are \emph{isomorphic},
and we write $\calA \cong \calB$, if there is an isomorphism $\calA \to \calB$.
An \emph{automorphism} is an isomorphism $\calA \to \calA$. By $\Aut(\calA)$ we denote the
set of all the automorphisms of a structure~$\calA$. A structure $\calA$ is \emph{rigid}
if $\Aut(\calA) = \{\id_A\}$.

A structure $\calA$ is a \emph{substructure} of a structure $\calB$,
and we write $\calA \le \calB$, if $A \subseteq B$ and the identity map $a \mapsto a$ is an embedding of $\calA$ into $\calB$.
A \emph{substructure of a structure $\calA$ generated by $S \subseteq A$} is the least (with respect to inclusion)
substructure $\calB$ of $\calA$ such that $S \subseteq B$. We denote by $\langle S \rangle_\calA$
the substructure of $\calA$ generated by $S \subseteq A$.
A structure $\calA$ is \emph{locally finite} if every finitely generated substructure of $\calA$ is finite.

A poset $\calE = (E, \Boxed \sqsubseteq)$ is a \emph{lattice}
if every pair of elements of $E$ has the greatest lower bound and the least upper bound.
Alternatively, an algebra $\calL = (L, \Boxed\land, \Boxed\lor)$ with two binary operations is a \emph{lattice}
if both operations are idempotent, commutative and associative, and the absorptive laws hold.
These two points of view are closely related: every poset
$(E, \Boxed\sqsubseteq)$ which is a lattice uniquely determines operations
$\Boxed{\land}, \Boxed{\lor} : E^2 \to E$
such that the algebra $(E, \Boxed{\land}, \Boxed{\lor})$ is a lattice
(take $a \land b$, resp.\ $a \lor b$, to be the greatest lower bound, resp.\ the least upper bound, for $a$ and $b$).
Conversely, every algebra $(L, \Boxed\land, \Boxed\lor)$ which is a lattice uniquely determines the partial order
$\Boxed{\sqsubseteq} \subseteq L^2$ such that the poset $(L, \Boxed{\sqsubseteq})$ is a lattice
(take $a \sqsubseteq b$ if and only if $a \land b = a$).
What makes these two approaches to lattices different are the embeddings.
Clearly, every embedding $f : (L_1, \Boxed{\land_1}, \Boxed{\lor_1}) \to (L_2, \Boxed{\land_2}, \Boxed{\lor_2})$
between algebras is also an embedding $(L_1, \Boxed{\sqsubseteq_1}) \to (L_2, \Boxed{\sqsubseteq_2})$, where $\Boxed{\sqsubseteq_1}$
and $\Boxed{\sqsubseteq_2}$ are the corresponding derived lattice-ordering relations. The converse, however, is not true.

A poset $\calE = (E, \Boxed\sqsubseteq)$ is a \emph{(meet) semilattice} if every pair of elements of $E$ has the
greatest lower bound. An algebra $\calS = (L, \Boxed\land)$ with one binary operation is a \emph{(meet) semilattice}
if the operation is idempotent, commutative and associative.

\paragraph{Classes of structures.}
A \emph{variety} of algebras is a class of algebras over a fixed algebraic language which is
closed with respect to taking homomorphic images, subalgebras and products of arbitrary families of algebras from the class.
The study of varieties of lattices is a deep and active research field in modern algebra, and we refer the reader to
\cite{Jipsen-Rose} for more insight into the typical problems addressed in this context.
Clearly, the class $\SSS$ of all the semilattices as algebras (of all cardinalities)
is a variety of semilattices and the class $\LL$ of all the lattices as algebras (of all cardinalities)
is a variety of lattices. Let $\DD$ denote the variety of all the distributive lattices.

\Fraisse\ theory is a deep structural theory of classes of relational structures.
The \emph{age} of a countably infinite structure $\calA$ is the class of all the finite
structures that embed into~$\calA$. The age of $\calA$ will be denoted by~$\Age(\calA)$.
A class $\KK$ of finite structures is an \emph{age} if there is countably infinite structure $\calA$ such that
$\KK = \Age(\calA)$. It is easy to see that a class $\KK$ of finite structures is an age if and only if
$\KK$ is an abstract class (that is, closed for isomorphisms),
there are at most countably many pairwise nonisomorphic structures in $\KK$,
$\KK$ has the \emph{hereditary property}:
\begin{description}
\item[(HP)]
  if $\calA \in \KK$ and $\calB \hookrightarrow \calA$ then $\calB \in \KK$;
\end{description}
and $\KK$ has the \emph{joint embedding property}:
\begin{description}
\item[(JEP)]
  for all $\calA, \calB \in \KK$ there is a $\calC \in \KK$ such that
  $\calA \hookrightarrow \calC$ and $\calB \hookrightarrow \calC$.
\end{description}
An age $\KK$ is a \emph{\Fraisse\ age} (= \Fraisse\ class = amalgamation class)
\cite{Fraisse1,Fraisse2} if $\KK$ satisfies the
\emph{amalgamation property}:
\begin{description}
\item[(AP)]
    for all $\calA, \calB, \calC \in \KK$ and embeddings $f : \calA \hookrightarrow \calB$ and
    $g : \calA \hookrightarrow \calC$ there exist $\calD \in \KK$ and embeddings $f' : \calB \hookrightarrow \calD$ and
    $g' : \calC \hookrightarrow \calD$ such that $f' \circ f = g' \circ g$.
\end{description}

A structure $\calC$ is \emph{ultrahomogeneous} if for every finitely generated structure $\calA$ and every pair of
embeddings $f, g : \calA \hookrightarrow \calC$ there is an automorphism $h \in \Aut(\calC)$ such that
$f = h \circ g$. The age of every locally finite countably infinite ultrahomogeneous structure
is a \Fraisse\ age~\cite{Fraisse1,Fraisse2}.
Conversely, for every \Fraisse\ age $\KK$ there is a unique (up to isomorphism)
countably infinite locally finite ultrahomogeneous structure $\calA$ such that $\KK = \Age(\calA)$~\cite{Fraisse1,Fraisse2}.
We say that $\calA$ is the \emph{\Fraisse\ limit} of $\KK$. For details on \Fraisse\ theory
and further model theoretic background we refer the reader to~\cite{hodges}.

\paragraph{Ramsey theory.}
The leitmotif of Ramsey theory is to prove the existence of regular patterns that occur when a large structure
is considered in a restricted context. It started with the nowadays famous Ramsey theorem whose finite version takes the
following form where for a set $S$ and a positive integer $k$ by $\binom Sk$ we denote the
set of all the $k$-element subsets of~$S$:

\begin{THM}[Finite Ramsey Theorem \cite{Ramsey}]\label{wtc.thm.FRT}
  For positive integers $k$, $m$ and $r$ there exists an integer $n$ such that for every coloring
  $\chi : \binom nk \to \{1, 2, \ldots, r\}$
  there exists a set $S \in \binom nm$ such that $\chi(X) = \chi(Y)$ for all $X, Y \in \binom S k$.
\end{THM}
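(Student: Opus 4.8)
The plan is to prove this by a double induction: an outer induction on the size $k$ of the subsets being coloured, and, for fixed $k$, an inner reduction on the number $r$ of colours. It is convenient to abbreviate the conclusion by writing $n \longrightarrow (m)^k_r$ to mean that every colouring $\chi : \binom nk \to \{1,\ldots,r\}$ admits a set $S \in \binom nm$ with $\chi$ constant on $\binom Sk$; the theorem then asserts that for all $k, m, r$ some $n$ with $n \longrightarrow (m)^k_r$ exists.

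First I would dispose of the base case $k = 1$: here $\binom n1$ is essentially $\{1,\ldots,n\}$, and the pigeonhole principle gives $n \longrightarrow (m)^1_r$ as soon as $n \ge (m-1)r + 1$. Next, still before touching the induction on $k$, I would show how to trade in colours: if $n \longrightarrow (p)^k_2$ and $p \longrightarrow (m)^k_{r-1}$, then $n \longrightarrow (m)^k_r$. Indeed, given an $r$-colouring, merge the colours $1,\ldots,r-1$ into a single colour to obtain a $2$-colouring; a monochromatic $p$-set $S$ for the latter either is monochromatic in the original colour $r$ (and we are done), or carries only the colours $1,\ldots,r-1$, so we may apply $p \longrightarrow (m)^k_{r-1}$ inside $S$. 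Iterating, it suffices to establish the two-colour statement $n \longrightarrow (m)^k_2$ for every $k$ and $m$.

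The heart of the argument is the inductive step on $k$: assuming the full statement at level $k-1$ (for all numbers of colours), prove it at level $k$. Fix $m$ and $r$, and choose $t$ so large that $t \longrightarrow (m)^{k-1}_r$ by the induction hypothesis. I would then take $n$ enormous and, given a colouring $\chi$ of $\binom nk$, greedily build a sequence $a_1 < a_2 < \cdots < a_t$ of elements of $\{1,\ldots,n\}$ together with a nested family of ``surviving'' sets $[n] \supseteq S_0 \supseteq S_1 \supseteq \cdots$, maintaining the invariant that for every $j$, every $(k-1)$-subset $T$ of $\{a_1,\ldots,a_j\}$, and all $x \in S_j$, the colour $\chi(T \cup \{x\})$ does not depend on the choice of $x$. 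At step $j+1$ one picks $a_{j+1}$ to be any element of $S_j$, and then successively thins $S_j \setminus \{a_{j+1}\}$: for each of the boundedly many $(k-1)$-subsets $T$ of $\{a_1,\ldots,a_{j+1}\}$ containing $a_{j+1}$, the map $x \mapsto \chi(T \cup \{x\})$ is an $r$-colouring of the current surviving set, and pigeonhole lets us pass to a monochromatic subset losing at most a factor of $r$; the result is $S_{j+1}$. This works provided $n$ is chosen large enough that $t$ such rounds leave a nonempty set, which forces $n$ to be something like an iterated exponential in $t, k, r$ — completely explicit, if enormous.

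Finally I would harvest the sequence. By the invariant, for any indices $i_1 < \cdots < i_k$ the colour $\chi(\{a_{i_1},\ldots,a_{i_k}\})$ equals a value $\chi^*(\{i_1,\ldots,i_{k-1}\})$ that depends only on the first $k-1$ of them; this defines a colouring $\chi^*$ of $\binom t{k-1}$ by $r$ colours. Since $t \longrightarrow (m)^{k-1}_r$, there is $I \in \binom tm$ on which $\chi^*$ is constant, and then $\{a_i : i \in I\}$ is the desired monochromatic $m$-set for $\chi$: every $k$-subset of it is $\{a_{i_1},\ldots,a_{i_k}\}$ for some $i_1 < \cdots < i_k$ in $I$, and its $\chi$-colour is $\chi^*(\{i_1,\ldots,i_{k-1}\})$, the constant value of $\chi^*$ on $\binom I{k-1}$. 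The main obstacle here is not conceptual but bookkeeping: one must choose the parameters in the right order (fix $k, m, r$; let the induction supply $t$; then let the greedy construction dictate $n$) and check that each thinning step is legitimate — a routine but fiddly estimate. I should note that an alternative route avoids all bounds: prove the infinite Ramsey theorem first and deduce the finite version by a compactness argument (K\"onig's lemma on the tree of partial colourings), at the cost of an appeal to infinitary methods.
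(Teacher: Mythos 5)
The paper does not prove this statement at all: it is quoted as the classical Finite Ramsey Theorem with a citation to Ramsey's original 1930 paper, and serves only as background in the preliminaries (the paper's actual Ramsey-theoretic engine is the Graham--Rothschild theorem, also quoted without proof). So there is nothing in the paper to compare your argument against line by line; what matters is whether your proof stands on its own, and it does. Your argument is the standard Skolem/Erd\H{o}s--Rado proof: induction on $k$ with the pigeonhole base case $k=1$, a greedy construction of a pre-homogeneous sequence $a_1 < \cdots < a_t$ (each new point followed by repeated pigeonhole thinnings so that the colour of $T \cup \{x\}$ depends only on the $(k-1)$-set $T$ of earlier points and not on the surviving point $x$), and then an application of the level-$(k-1)$ statement to the induced colouring $\chi^*$ of $\binom{t}{k-1}$. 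The harvest step is justified correctly: for $i_1 < \cdots < i_k$ in $I$ one has $a_{i_k} \in S_{i_{k-1}}$, so the invariant applies with $j = i_{k-1}$. One small remark: your colour-reduction lemma (trading $r$ colours for $2$ via merging) is correct but superfluous, since your main induction already carries an arbitrary number of colours $r$ through the greedy construction and through $\chi^*$; you could delete that paragraph without loss. Your closing observation that the finite theorem also follows from the infinite one by K\"onig's lemma is likewise correct, and is the other standard route.
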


The Graham-Rothschild Theorem (Theorem~\ref{wtc.thm.G-R} below) is one of the most powerful
tools in Ramsey theory. It formulation requires some preparation.
Let $A$ be a finite alphabet. A word $u$ of length $n$ over $A$ can be thought of as
an element of $A^n$ but also as a mapping $u : \{1, 2, \ldots, n\} \to A$. In the latter case
$u^{-1}(a)$, $a \in A$, denotes the set of all the positions in $u$ where $a$ appears.

Let $X = \{x_1, x_2, \ldots\}$ be a countably infinite set of variables and let
$A$ be a finite alphabet disjoint from $X$. An \emph{$m$-parameter word over $A$ of length $n$} is a word
$w \in (A \union \{x_1, x_2, \ldots, x_m\})^n$ satisfying the following:
\begin{itemize}
\item
  each of the letters $x_1, \ldots, x_m$ appears at least once in $w$, and
\item
  $\min(w^{-1}(x_i)) < \min(w^{-1}(x_j))$ whenever $1 \le i < j \le m$.
\end{itemize}
Let $W^n_m(A)$ denote the set of all the $m$-parameter words over $A$ of length~$n$.
For $u \in W^n_m(A)$ and $v = v_1 v_2 \ldots v_m \in W^m_k(A)$ let
$$
  u \cdot v = u[v_1/x_1, v_2/x_2, \ldots, v_m/x_m] \in W^n_k(A)
$$
denote the word obtained by replacing each occurrence of $x_i$ in $u$ with $v_i$,
simultaneously for all $i \in \{1, \ldots, m\}$.

\begin{THM}[Graham, Rothschild~\cite{GR}]\label{wtc.thm.G-R}
  Let $A$ be a finite alphabet and let
  $m, \ell \ge 1$ and $k \ge 2$. Then there exists an $n$ such that for every partition
  $W^n_\ell(A) = \calX_1 \union \ldots \union \calX_k$ there exist a $u \in W^n_m(A)$ and $j$ such that
  $\{u \cdot v : v \in W^m_\ell(A)\} \subseteq \calX_j$.
\end{THM}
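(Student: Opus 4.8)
The statement is the Graham--Rothschild parameter-set theorem, so the plan is to follow the classical route that builds it up from the Hales--Jewett theorem by three nested inductions. The first layer is the \emph{Hales--Jewett theorem} itself, the special case $m=1$, $\ell=0$: for every finite alphabet $A$ and every $k$ there is an $n$ such that any $k$-colouring of $W^n_0(A)=A^n$ has a monochromatic combinatorial line $\{u\cdot v:v\in A\}$ for some $u\in W^n_1(A)$. I would prove this by induction on $|A|$; the case $|A|=1$ is immediate, and the inductive step is the classical \emph{focusing} argument. Working in a concatenation $n=n_1+\dots+n_t$ of coordinate blocks, with $t$ chosen by a pigeonhole bound depending on $k$, one uses the inductive hypothesis for the alphabet $A\setminus\{a_0\}$ separately inside each block to produce lines avoiding $a_0$, threads these together so that a chain of ``partial lines'' is forced to focus at a common point, and a pigeonhole on the $k$ colours then yields a genuine monochromatic line. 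Shelah's version of this argument, using the cube lemma, packages the bookkeeping so as to give primitive-recursive bounds, while the original Hales--Jewett iteration reaches the same conclusion with Ackermannian bounds.

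The second layer upgrades this to the \emph{subspace} form, still with $\ell=0$ but arbitrary $m$: any $k$-colouring of $A^n$ contains a monochromatic $m$-dimensional combinatorial subspace $\{u\cdot v:v\in A^m\}$, $u\in W^n_m(A)$. This is a soft product argument: induct on $m$, and at each step apply the $m=1$ theorem over a suitable alphabet to peel off one variable while remaining inside a monochromatic line, iterating $m$ times; equivalently, run Hales--Jewett over the alphabet $A^s$ for a large $s$ and reinterpret the resulting line as an $m$-subspace over $A$.

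The third layer is an induction on the number $\ell$ of free variables in the words being coloured, with base case $\ell=0$ supplied by the second layer. Given a $k$-colouring $\chi$ of $W^n_\ell(A)$, encode an $\ell$-parameter word by the position set of its first variable $x_1$, which by definition must begin before all other variables, together with an $(\ell-1)$-parameter word over $A$ on the complementary coordinates enriched by the datum of how $x_1$ is interleaved with it. Applying the inductive hypothesis for $\ell-1$ stabilises $\chi$ on the $(\ell-1)$-parameter part, and then applying the subspace Hales--Jewett theorem over an enlarged alphabet that records this finite local interleaving data stabilises the colour as the placement of $x_1$ varies; combining the two produces a $u\in W^n_m(A)$ with $\{u\cdot v:v\in W^m_\ell(A)\}$ monochromatic, as required. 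A compactness argument is not needed, since the theorem already asserts the existence of a finite $n$.

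The main obstacle is genuinely the Hales--Jewett base case: it is the only ingredient that is not ``soft'', and it is not a consequence of the Finite Ramsey Theorem quoted above---on the contrary, the Graham--Rothschild theorem is a common generalisation of both the Finite Ramsey Theorem and the Hales--Jewett theorem---so the focusing argument must be carried out by hand. The $\ell$-induction is conceptually the most intricate step, but once Hales--Jewett and its subspace form are available it amounts to careful bookkeeping with the ordering condition $\min(w^{-1}(x_i))<\min(w^{-1}(x_j))$ on parameter words, which is precisely what makes that bookkeeping delicate.
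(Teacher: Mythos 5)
This is a point where the paper offers no proof at all: Theorem~\ref{wtc.thm.G-R} is the Graham--Rothschild parameter-set theorem, imported verbatim from~\cite{GR} and used as a black box, so your attempt can only be measured against the literature, not against anything in the paper. Your architecture is the standard modern one (Hales--Jewett by colour focusing, then the $m$-dimensional subspace version by passing to the alphabet $A^m$, then induction on $\ell$ with the subspace theorem as base case), and the first two layers are correct as sketched. It is worth noting that this is \emph{not} how~\cite{GR} itself proceeds --- the original argument is a direct simultaneous induction with Ackermannian bounds, and the primitive-recursive bounds come from Shelah's later proof of Hales--Jewett --- but the route you describe is the one found in the standard references (Graham--Rothschild--Spencer, Pr\"omel--Voigt) and is perfectly legitimate.

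The genuine gap is in the third layer, which is precisely where all of the difficulty of Graham--Rothschild beyond Hales--Jewett is concentrated. Your proposed encoding --- split an $\ell$-parameter word into the position set of $x_1$ plus an $(\ell-1)$-parameter word on the complementary coordinates --- does not literally give a product structure to which the inductive hypothesis and Hales--Jewett can be applied in sequence: deleting the positions of $x_1$ produces residual words of \emph{varying length}, so ``the $(\ell-1)$-parameter part'' does not live in a single $W^{n'}_{\ell-1}(A)$, and the ``interleaving datum'' is not of bounded size, so it cannot simply be absorbed into a finite enlarged alphabet. The standard induction step instead first proves the pigeonhole case $m=\ell+1$ (stabilising the colour of the $\ell$-parameter subwords of an $(\ell+1)$-parameter word via Hales--Jewett over an alphabet of column types, with careful handling of the condition $\min(w^{-1}(x_i))<\min(w^{-1}(x_j))$), and then amplifies to arbitrary $m$ by a separate iteration in the spirit of deriving Ramsey's theorem from the pigeonhole principle --- an induction your sketch omits. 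So the plan identifies the right ingredients, but as written the $\ell$-induction would not go through without being substantially reorganised.
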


\paragraph{Structural Ramsey theory.}
Generalizing the Finite Ramsey Theorem, the structural Ramsey theory originated at
the beginning of 1970’s in a series of papers (see \cite{N1995} for references).
Let $\Theta$ be a first-order language,
let $\calA$, $\calB$ and $\calC$ be finite $\Theta$-structures and let $k \ge 2$ be an integer.
Let $\binom \calB \calA = \{\tilde \calA : \tilde \calA \le \calB$ and $\tilde \calA \cong \calA\}$.
We write $\calC \longrightarrow (\calB)^\calA_k$
to denote the following: for every partition
$\binom \calC \calA = \calX_1 \union \ldots \union \calX_k$
there exist $\tilde \calB \in \binom \calC \calB$ and $j \in \{1, \ldots, k\}$ such that
$\binom{\tilde \calB}{\calA} \subseteq \calX_j$.
A class $\KK$ of finite $\Theta$-structures
has the \emph{Ramsey property} if the following holds:
\begin{description}
\item[(RP)]
  for all $\calA, \calB \in \KK$ such that $\calA \hookrightarrow \calB$
  and any integer $k \ge 2$ there is a $\calC \in \KK$ such that $\calC \longrightarrow (\calB)^\calA_k$.
\end{description}

A class $\KK$ of finite structures is a \emph{Ramsey age} if it has (HP), (JEP) and (RP).
If a class $\KK$ of finite $\Theta$-structures has (RP) and (JEP) then $\KK$ also has (AP)~\cite{Nesetril}.
So, every Ramsey age is a \Fraisse\ age.

\paragraph{The ordering property.}
The ordering property is a property related to the Ramsey property and in many particular cases
is implied by nontrivial Ramsey properties~\cite{Nesetril}.
The ordering property was introduced in~\cite{Nesetril-Rodl-1975,Nesetril-Rodl-OP-GRA} and has since played an important role in
Structural Ramsey theory.

Let $\Theta$ be a first-order language and let $\Boxed< \notin \Theta$
be a new binary relational symbol. Let $\Theta^* = \Theta \union \{\Boxed<\}$.
Given a $\Theta^*$-structure $\calA$, we shall always interpret 
$<$ in $\calA$ as a linear order on $A$.
A class $\KK^*$ of $\Theta^*$-structures is
an \emph{order expansion} of the class $\KK$ of $\Theta$-structures if
  for every $(\calA, \Boxed<) \in \KK^*$ we have $\calA \in \KK$, and
  for every $\calA \in \KK$ there is at least one linear order $<$ on $A$ such that
  $(\calA, \Boxed<) \in \KK^*$.

For a class $\KK^*$ of $\Theta^*$-structures let $\restr{\KK^*}{\Theta} = \{\calA : (\calA, \Boxed<) \in \KK^*\}$.
Clearly, $\restr{\KK^*}{\Theta}$ is a class of $\Theta$-structures.

An order expansion $\KK^*$ of $\KK$ is \emph{reasonable}~\cite{KPT} if
  for all $\calA, \calB \in \KK$, every embedding $f : \calA \hookrightarrow \calB$ and every
  linear order $<$ on $A$ such that $(\calA, \Boxed<) \in \KK^*$ there is a linear order $\sqsubset$ on $B$
  such that $(\calB, \Boxed\sqsubset) \in \KK^*$ and $f$ is an embedding of $(\calA, \Boxed<)$ into $(\calB, \sqsubset)$.
An order expansion $\KK^*$ of $\KK$ is a \emph{reasonable (JEP)-expansion} of $\KK$ if
$\KK^*$ is a reasonable expansion of $\KK$ and $\KK^*$ has (JEP).

The class $\KK^*$ of finite $\Theta^*$-structures has the \emph{ordering property} if the following holds, where
$\KK = \restr{\KK^*}{\Theta}$:
\begin{description}
\item[(OP)]
  for every $\calA \in \KK$ there is a $\calB \in \KK$ such that $(\calA, \Boxed<) \embedsto (\calB, \Boxed\sqsubset)$
  for every linear order $<$ on $A$ such that $(\calA, \Boxed<) \in \KK^*$, and every linear order
  $\sqsubset$ on $B$ such that $(\calB, \Boxed\sqsubset) \in \KK^*$. We say that $\calB$ is a \emph{witness
  for the ordering property for $\calA$}.
\end{description}

\section{Varieties of lattices and semilattices as classes of relational structures}
\label{wtc.sec.slat}

It was shown in~\cite{kechris-sokic} that no order expansion of $\DD^\fin$ has the
Ramsey property. We shall now show that this is an ubiquitous phenomenon when it comes to locally
finite varieties of lattices.

\begin{THM}\label{wtc.thm.norp}
  Let $\VV$ be a nontrivial locally finite variety of lattices distinct from $\LL$ and $\DD$.
  Then no reasonable (JEP)-expansion of $\VV^\fin$ has the Ramsey property.
\end{THM}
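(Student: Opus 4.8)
The plan is to bypass the combinatorics of linear orders almost entirely and to trace the failure of the Ramsey property back to a structural defect of $\VV^\fin$ that no order expansion can repair: the failure of the amalgamation property. By the fact recalled above, a class of finite $\Theta$-structures with (RP) and (JEP) also has (AP); since a reasonable (JEP)-expansion of $\VV^\fin$ has (JEP) by definition, it suffices to show that no reasonable order expansion of $\VV^\fin$ has (AP), and for that it suffices to show that $\VV^\fin$ itself fails (AP).

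The first step is to invoke the classification of varieties of lattices with the amalgamation property: by a theorem that can be found in \cite{Jipsen-Rose}, the only nontrivial varieties of lattices having (AP) are $\DD$ and $\LL$. Since $\VV$ is nontrivial (hence, containing the two-element lattice, it contains $\DD$) and distinct from both $\DD$ and $\LL$, it follows that $\VV$ does not have (AP). Intuitively, $\VV$ being distinct from $\DD$ forces it to contain a nondistributive lattice and hence a copy of the diamond $M_3$ or of the pentagon $N_5$, and it is the interplay of such configurations with the equations of $\VV$ that is responsible for the breakdown of amalgamation in all lattice varieties other than $\DD$ and $\LL$. Because $\VV$ is locally finite we may then descend to the finite part: given a span of finite lattices from $\VV$, any amalgam in $\VV$ contains the subalgebra generated by the two images, and that subalgebra is finite and still lies in $\VV$; hence $\VV^\fin$ does not have (AP) either. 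This is the one genuinely nontrivial ingredient, and it is precisely what dictates the hypotheses of the theorem: ``nontrivial'' rules out the trivial variety, and ``distinct from $\LL$ and $\DD$'' rules out the remaining two amalgamation varieties, so the hypotheses simply say that $\VV$ lacks (AP).

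The second step is a soft transfer lemma: if $\KK^*$ is a reasonable order expansion of a class $\KK$ of finite structures and $\KK^*$ has (AP), then $\KK$ has (AP). Indeed, given embeddings $f : \calA \hookrightarrow \calB$ and $g : \calA \hookrightarrow \calC$ in $\KK$, choose a linear order $<$ on $A$ with $(\calA, \Boxed<) \in \KK^*$ (possible since $\KK^*$ is an order expansion), and apply reasonableness to $f$ and to $g$ to obtain linear orders $\sqsubset_1$ on $B$ and $\sqsubset_2$ on $C$ with $(\calB, \Boxed{\sqsubset_1}), (\calC, \Boxed{\sqsubset_2}) \in \KK^*$ for which $f$ and $g$ become embeddings of $(\calA, \Boxed<)$; amalgamating this span inside $\KK^*$ and then forgetting the linear order produces an amalgam of the original span in $\KK$.

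Putting the pieces together: if a reasonable (JEP)-expansion $\KK^*$ of $\VV^\fin$ had (RP), then $\KK^*$ would have (AP) by the recalled fact, whence $\VV^\fin$ would have (AP) by the transfer lemma, contradicting the first step. I do not anticipate a serious obstacle in carrying this out, since the reduction is purely formal once the amalgamation classification is in hand; the only point requiring a line of care is the descent from the variety $\VV$ to its finite part $\VV^\fin$, which is exactly where local finiteness is used. It is also this viewpoint that exhibits $\DD$ and $\LL$ as honest exceptions rather than artefacts of the method: $\DD^\fin$ does admit reasonable (JEP)-expansions with (RP), while $\LL$ is not locally finite, so neither is reached by the argument.
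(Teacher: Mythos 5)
Your overall strategy coincides with the paper's: reduce (RP) for a reasonable (JEP)-expansion to (AP) for $\VV^\fin$ via Ne\v set\v ril's implication that (RP) together with (JEP) yields (AP), transfer (AP) from the expansion down to $\VV^\fin$ by forgetting the orders (your transfer lemma is essentially verbatim the paper's argument), and then contradict the classification of nontrivial amalgamation varieties of lattices as exactly $\LL$ and $\DD$. There is, however, a genuine gap at the step you yourself single out as ``the only point requiring a line of care'': the descent between $\VV$ and $\VV^\fin$. You need the implication ``$\VV$ fails (AP) $\Rightarrow$ $\VV^\fin$ fails (AP)'', equivalently ``$\VV^\fin$ has (AP) $\Rightarrow$ $\VV$ has (AP)''. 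But the argument you give --- any amalgam in $\VV$ of a finite span contains the finite subalgebra generated by the two images, which is again an amalgam --- proves only that a finite span amalgamable in $\VV$ is amalgamable in $\VV^\fin$, i.e.\ the implication ``$\VV$ has (AP) $\Rightarrow$ $\VV^\fin$ has (AP)''. That is the wrong direction for your purposes: the classification hands you \emph{some} non-amalgamable span in $\VV$, but a priori that span could consist of infinite lattices, and your subalgebra argument then says nothing about $\VV^\fin$. What is actually needed is that in a locally finite variety the failure of amalgamation is always witnessed by finite lattices; this is precisely the nontrivial half of the equivalence the paper quotes from Gr\"atzer (Corollary~509: for a locally finite variety $\XX$, $\XX$ has (AP) if and only if $\XX^\fin$ has (AP)), and it does not follow from your one-line observation. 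Either cite that result or prove it; as written, the chain of implications does not close.

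A minor point of attribution: the classification of the nontrivial lattice varieties with (AP) as exactly $\LL$ and $\DD$ is the theorem of Day and Je\v zek; the Jipsen--Rose monograph is a place to look it up, but the paper correctly credits the original source.
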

\begin{proof}
  Let $\VV$ be a nontrivial locally finite variety of lattices distinct from $\LL$ and $\DD$,
  and let $\WW$ be a reasonable (JEP)-expansion of $\VV^\fin$.
  Assume that $\WW$ has the Ramsey property. Because $\WW$ has (JEP) we know from \cite{Nesetril}
  that $\WW$ has~(AP).
  
  Let us show that $\VV^\fin$ has (AP).
  Take any $\calA, \calB_1, \calB_2 \in \VV^\fin$
  and embeddings $f_1 : \calA \hookrightarrow \calB_1$ and $f_2 : \calA \hookrightarrow \calB_2$. Because
  $\WW$ is a reasonable expansion of $\VV^\fin$, there exists a linear order $\sqsubset$
  such that $(\calA, \Boxed\sqsubset) \in \WW$, and then there exist linear orders
  $\sqsubset_1$ and $\sqsubset_2$ such that $(\calB_1, \Boxed{\sqsubset_1}), (\calB_2, \Boxed{\sqsubset_2}) \in \WW$
  and $f_1 : (\calA, \Boxed\sqsubset) \hookrightarrow (\calB_1, \Boxed{\sqsubset_1})$,
  $f_2 : (\calA, \Boxed\sqsubset) \hookrightarrow (\calB_2, \Boxed{\sqsubset_2})$ are embeddings.
  As we have just seen, $\WW$ has (AP), so there
  is a $(\calC, \Boxed\prec) \in \WW$ and embeddings $g_1 : (\calB_1, \Boxed{\sqsubset_1}) \hookrightarrow (\calC, \Boxed\prec)$
  and $g_2 : (\calB_2, \Boxed{\sqsubset_2}) \hookrightarrow (\calC, \Boxed\prec)$ such that $g_1 \circ f_1 = g_2 \circ f_2$.
  Then, clearly, $g_1 : \calB_1 \hookrightarrow \calC$ and $g_2 : \calB_2 \hookrightarrow \calC$
  are embeddings satisfying $g_1 \circ f_1 = g_2 \circ f_2$. This completes the proof that $\VV^\fin$ has (AP).
  
  It is a well known fact in lattice theory (see~\cite[Corollary~509]{gratzer-F}) that if $\XX$ is a locally finite
  variety of lattices then $\XX$ has (AP) if and only if $\XX^\fin$ has (AP).
  Therefore, $\VV$ has (AP). But by the famous result of Day and Je\v zek~\cite{day-jezek},
  the only nontrivial varieties of lattices with (AP) are $\LL$ and $\DD$. Contradiction.
\end{proof}

The main goal of this section is to show that every variety of lattices and every variety of semilattices
gives rise to a class of finite linearly ordered posets having both the Ramsey and the ordering property.
The idea is to replace the algebraic structure by the corresponding relational one. So, the structures we will be working with
are lattices understood as partially ordered sets expanded by linear orders that extend the partial order.

Let $\PP$ denote the class of all the posets (of all cardinalities).
A \emph{linearly ordered poset} is a structure $\calA = (A, \Boxed\sqsubseteq, \Boxed<)$ where
$(A, \Boxed\sqsubseteq)$ is a poset and $<$ is a linear order on $A$ which extends $\sqsubseteq$
(that is, if $a \sqsubseteq b$ and $a \ne b$ then $a < b$).
Let $\OPP$ denote the class of linearly ordered posets (of all cardinalities).
The Ramsey property for the class $\OPP^\fin$ was established in two steps:
first the ordering property for $\OPP^\fin$ was established in~\cite{PTW}, and then in~\cite{fouche}
the ordering property was used to prove the Ramsey property.

An alternative proof that the class $\OPP$ has the Ramsey property
was presented in~\cite{masul-rpopos}. Whereas the original proof in~\cite{PTW}
of the ordering property for $\OPP^\fin$ relies on the Dual Ramsey Theorem, the alternative
proof in~\cite{masul-rpopos} derives the the Ramsey property for $\OPP^\fin$
as a direct consequence of the Graham-Rothschild Theorem (Theorem~\ref{wtc.thm.G-R}).
Interestingly, the class $\OPP$ was the only known Ramsey class of structures where
the proof of the Ramsey property relied on proving first that the class has the ordering property.
The proof presented in~\cite{masul-rpopos} is new not only because new proof strategies were used,
but also because it does not not rely on the ordering property.

We shall now present an extract of this proof restructured so as to enable us
to reason about the ordering property not only of the class $\OPP$, but also some of its subclasses.
In this presentation we refrain from the explicit use of the machinery of category theory which was the main
language used in~\cite{masul-rpopos} and instead rephrase the proof in terms of first-order
structures and embeddings.

Let $\lex$, $\alex$ and $\clex$ denote the \emph{lexicographic}, \emph{anti-lexicographic} and
\emph{complemented lexicographic} ordering on $\calP(\{1, \ldots, n\})$, respectively, defined as follows:
\begin{align*}
A \lex B \text{ iff }  & A \subseteq B \text{ or }\\
                       & \min(B \setminus A) < \min(A \setminus B) \text{ in case $A$ and $B$ are incomparable},\\
A \alex B \text{ iff } & A \subseteq B \text{ or }\\
                       & \max(A \setminus B) < \max(B \setminus A) \text{ in case $A$ and $B$ are incomparable},\\
A \clex B \text{ iff } & \{1, \ldots, n\} \setminus A \lex \{1, \ldots, n\} \setminus B\\
          \text{ iff } & A \supseteq B \text{ or }\\
                       & \min(A \setminus B) < \min(B \setminus A) \text{ in case $A$ and $B$ are incomparable},
\end{align*}
where $<$ denotes the usual linear order on the integers. It is easy to see that all three are linear orders on
$\calP(\{1, \ldots, n\})$.

\begin{THM}\label{wtc.thm.1plus} (cf.\ \cite[Theorem~4.1]{masul-rpopos})
  For $n \in \NN$ let $\Pi_n$ denote the following linearly ordered poset:
  $
    \Pi_n = \big(\calP(\{1, \ldots, n\}), \Boxed\supseteq, \Boxed\clex\big)
  $.
  \begin{itemize}
  \item[$(a)$]
    For every $k \ge 2$ and all finite linearly ordered posets $\calA, \calB \in \OPP$ such that $\calA \hookrightarrow \calB$
    there is an $N \in \NN$ such that $\Pi_N \longrightarrow (\calB)^{\calA}_k$.
  \item[$(b)$]
    Let $\KK^*$ be a subclass of $\OPP$ such that $\Pi_n \in \KK^*$ for all $n \in \NN$. Then
    $\KK^*$ has the Ramsey property. In particular, $\OPP$ has the Ramsey property (see~\cite{PTW, fouche} for the original proof).
  \end{itemize}
\end{THM}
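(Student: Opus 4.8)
The plan is to derive both parts from the Graham–Rothschild Theorem (Theorem~\ref{wtc.thm.G-R}) by encoding linearly ordered posets as parameter words, exactly in the spirit of~\cite{masul-rpopos}. Part $(b)$ is a formal consequence of part $(a)$: given $\calA, \calB \in \KK^*$ with $\calA \hookrightarrow \calB$ and $k \ge 2$, part $(a)$ furnishes an $N$ with $\Pi_N \longrightarrow (\calB)^{\calA}_k$, and since $\Pi_N \in \KK^*$ by hypothesis, the witness $\calC = \Pi_N$ lies in $\KK^*$; taking $\KK^* = \OPP$ (which contains every $\Pi_n$ because $\clex$ is a linear extension of $\supseteq$) yields the classical statement. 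So the entire content is in part $(a)$.

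For part $(a)$, first I would set up the correspondence between the poset $\big(\calP(\{1,\dots,n\}), \Boxed\supseteq\big)$ and parameter words over a two-letter alphabet. Identify a subset $S \subseteq \{1,\dots,n\}$ with its characteristic word in $\{0,1\}^n$; then $A \supseteq B$ corresponds to the natural coordinatewise order, and one checks that $\clex$ is precisely the linear extension of this order that the parameter-word machinery produces. The key structural fact is that for $u \in W^n_m(\{0,1\})$, the map $v \mapsto u \cdot v$ sends $W^m_\ell(\{0,1\})$ into $W^n_\ell(\{0,1\})$ and, read through the above identification, is an embedding of linearly ordered posets $\Pi_\ell \hookrightarrow \Pi_n$; moreover every embedding $\Pi_\ell \hookrightarrow \Pi_n$ arises this way (up to the bookkeeping that the parameters be listed in order of first occurrence, which is exactly the second defining condition of a parameter word, and matches the $\clex$ ordering). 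Conversely, $\Pi_\ell$ itself is identified with $W^\ell_\ell(\{0,1\})$ via $v \mapsto$ the word of length $\ell$ spelling out $x_1 x_2 \cdots x_\ell$ composed appropriately. This reduces copies of $\Pi_\ell$ inside $\Pi_N$ to the sets $\{u \cdot v : v \in W^N_\ell\}$ of Graham–Rothschild.

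Next I would handle the passage from arbitrary finite linearly ordered posets $\calA, \calB$ to the cubes $\Pi_\ell$. The point is that every finite linearly ordered poset embeds into some $\Pi_\ell$: a finite poset $(A,\Boxed\sqsubseteq)$ embeds into a power set ordered by $\supseteq$ (send $a$ to the complement of its down-set, say, choosing the coordinates so that the induced $\clex$ order realizes the prescribed linear order $<$ on $A$ — this last point needs a small argument, permuting and padding coordinates). Fix such an embedding $\calB \hookrightarrow \Pi_L$ and let $\calA \hookrightarrow \calB \hookrightarrow \Pi_L$. Now apply Theorem~\ref{wtc.thm.G-R} with alphabet $\{0,1\}$, with $\ell := L$ in the role of the "small" parameter count, with $m := L$, wait — more carefully: we colour copies of $\calA$ in $\Pi_N$; each such copy lies in some copy of $\Pi_L$, so it suffices to make copies of $\Pi_L$ monochromatic for the induced colouring of copies of $\calA$ inside them, i.e.\ we need $\Pi_N \longrightarrow (\Pi_L)^{\Pi_L}_{k'}$ where $k'$ is the number of colourings of $\binom{\Pi_L}{\calA}$. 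Graham–Rothschild with parameters $m = L$, $\ell = L$, $k = k'$ gives an $n = N$ and a $u \in W^N_L$ such that $\{u\cdot v : v \in W^L_L\}$ is monochromatic; translating back, the copy of $\Pi_L$ picked out by $u$ has all its sub-copies of $\calA$ the same colour, hence contains a monochromatic copy of $\calB$. A standard product/iteration argument (or a single application with a large enough alphabet-free word) closes the gap between "copies of $\Pi_L$" and "copies of $\calB$".

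The main obstacle I anticipate is the combinatorial bookkeeping in the first step: verifying cleanly that the complemented-lexicographic order $\clex$ on $\calP(\{1,\dots,n\})$ is exactly the order induced by the parameter-word substitution operation, and that embeddings of linearly ordered posets between cubes $\Pi_\ell \hookrightarrow \Pi_n$ correspond bijectively to parameter words $W^n_\ell(\{0,1\})$ respecting this order. This is the technical heart transplanted from~\cite{masul-rpopos}; once it is in place, both $(a)$ and $(b)$ follow from Graham–Rothschild with only routine (if fiddly) index-chasing. A secondary, genuinely necessary point is the embedding lemma "every finite linearly ordered poset embeds into some $\Pi_\ell$", which requires one to control the linear order $<$ on $A$, not merely the partial order — this is where the specific choice of $\clex$ (rather than $\lex$) pays off.
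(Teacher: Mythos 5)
Your part $(b)$ and the general strategy (Graham--Rothschild via parameter words, in the spirit of \cite{masul-rpopos}) are on target, and the observation that every finite linearly ordered poset embeds into some $\Pi_L$ (via downsets, with $\clex$ controlling the linear order) is indeed a needed ingredient. But the central reduction in your part $(a)$ is broken. You propose to colour copies of $\Pi_L$ in $\Pi_N$ by the induced colour pattern on $\binom{\Pi_L}{\calA}$ and to apply $\Pi_N \longrightarrow (\Pi_L)^{\Pi_L}_{k'}$. First, that arrow is vacuous: a copy of $\Pi_L$ contains exactly one copy of $\Pi_L$ (itself), so the statement holds trivially and yields nothing. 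Second, even the intended conclusion --- that all parameter-word copies of $\Pi_L$ in some larger cube carry the same colour pattern --- does not produce a single copy of $\Pi_L$ (hence of $\calB$) whose copies of $\calA$ are monochromatic; ``the same pattern in every copy'' is not ``a constant pattern within one copy''. This is the standard failure mode of pattern colourings, and no product or iteration argument closes it. Third, the copies of $\calA$ in a copy of $\Pi_L$ that you must control are arbitrary sub-linearly-ordered-posets, not only those induced by parameter words, and your claimed bijection between embeddings $\Pi_\ell \hookrightarrow \Pi_n$ and words in $W^n_\ell(\{0,1\})$ cannot be taken for granted: poset embeddings between cubes need not preserve joins and meets.

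The paper's proof avoids all three issues by parametrising copies of $\calA$ rather than copies of cubes. Over the one-letter alphabet $\{0\}$, each $u \in W^N_{m_\calA}(\{0\})$, where $m_\calA$ is the number of nonempty downsets of $\calA$, yields an embedding $\Phi_{\calA,N}(u) : \calA \to \Pi_N$ sending $i$ to the union of the parameter blocks indexed by the downsets containing $i$. The given colouring of $\binom{\Pi_N}{\calA}$ is pulled back to a colouring of $W^N_{m_\calA}(\{0\})$, Graham--Rothschild produces a $u \in W^N_{m_\calB}(\{0\})$ with $\{u\cdot h : h \in W^{m_\calB}_{m_\calA}(\{0\})\}$ monochromatic, and --- this is the lemma your proposal is missing --- every copy of $\calA$ inside $\tilde\calB = \im(\Phi_{\calB,N}(u))$ factors as $\Phi_{\calB,N}(u)\circ f = \Phi_{\calA,N}(u\cdot h)$ for some $h \in W^{m_\calB}_{m_\calA}(\{0\})$. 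That factorisation is exactly what converts monochromaticity of a set of words into monochromaticity of $\binom{\tilde\calB}{\calA}$. Without it, or an equivalent surjectivity statement, the Graham--Rothschild conclusion does not reach all copies of $\calA$ in the chosen copy of $\calB$, and your argument does not go through.
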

\begin{proof} (Sketch)
  $(a)$
  Let $k \ge 2$ and let $\calA = (A, \Boxed\sqsubseteq, \Boxed<)$ and $\calB = (B, \Boxed\sqsubseteq, \Boxed<)$
  be finite linearly ordered posets such that $\calA \hookrightarrow \calB$.
  
  A \emph{downset} in a poset $\calA$ is a subset $D \subseteq A$ such
  that $x \in D$ and $y \sqsubseteq x$ implies $y \in D$.
  For $a \in A$ let $\downarrow_\calA a = \{x \in A : x \sqsubseteq a\}$.
  Clearly, $\downarrow_\calA a$ is always a downset in $\calA$, but not all the downsets are of the form
  $\downarrow_\calA a$. To see this, take two $a, b \in A$ incomparable with respect to~$\sqsubseteq$.
  Then $\downarrow_\calA a \;\union \downarrow_\calA b$
  is a downset in $\calA$ which is not of the form $\downarrow_\calA x$ for some $x \in A$.
  
  Let $\calA$ have $m_\calA$ nonempty downsets and let $\calB$ have $m_\calB$ nonempty downsets.
  According to the Graham-Rothschild Theorem (Theorem~\ref{wtc.thm.G-R}) there exists an $N$ such that for every partition
  $W^N_{m_\calA}(\{0\}) = \calX_1 \union \ldots \union \calX_k$ there is a $u \in W^N_{m_\calB}(\{0\})$ and a $j$ satisfying
  $\{u \cdot h : h \in W^{m_\calB}_{m_\calA}(\{0\})\} \subseteq \calX_j$.
  Let us show that $\Pi_N \longrightarrow (\calB)^{\calA}_k$.
  
  Let $D_1$, \ldots, $D_{m_\calA}$ be all the nonempty downsets in $\calA$ and let
  $D_1 \alex D_2 \alex \ldots \alex D_m$.
  For $u \in W^n_m(\{0\})$, let $X_i = u^{-1}(x_i)$, $1 \le i \le m$, and let
  $a_i = \UNION\{X_\alpha : i \in D_\alpha\}$, $1 \le i \le k$.
  It was shown in the proof of \cite[Theorem~4.1]{masul-rpopos} that
  for every $u \in W^n_m(\{0\})$ the mapping $\Phi_{\calA, n}(u) : \calA \to \Pi_n : i \mapsto a_i$ is an embedding.
  
  Take any partition $\binom{\Pi_N}{\calA} = \calX_1 \union \ldots \union \calX_k$.
  Let $W^N_{m_\calA}(\{0\}) = \calX'_1 \union \ldots \union \calX'_k$
  be a partition constructed as follows:
  for $w \in W^N_{m_\calA}(\{0\})$ let
  \begin{equation}\label{04.eq.2}
    w \in \calX'_j \text{ if and only if } \im(\Phi_{\calA, N}(w)) \in \calX_j
  \end{equation}
  (here, $\im(f)$ denotes the image of $f$ as a substructure of its codomain).
  By the construction of $N$, there exist a $u \in W^N_{m_\calB}(\{0\})$ and a $j$ such that
  \begin{equation}\label{04.eq.1}
    \{u \cdot h : h \in W^{m_\calB}_{m_\calA}(\{0\})\} \subseteq \calX'_j.
  \end{equation}
  Then $\tilde\calB = \im(\Phi_{\calB, N}(u))$ is a copy of $\calB$ in $\Pi_N$ because $\Phi_{\calB, N}(u)$ is an embedding.
  Let us show that $\binom{\tilde\calB}{\calA} \subseteq \calX_j$.

  Take any $\tilde\calA \in \binom{\tilde\calB}{\calA}$. Then there is an embedding $f : \calA \hookrightarrow \calB$ such that
  $\im(\Phi_{\calB, N}(u) \circ f) = \tilde\calA$. It was shown in the proof of \cite[Theorem~4.1]{masul-rpopos} that
  one can then find
  a word $h = h_1 h_2 \ldots h_{m_\calB} \in W^{m_\calB}_{m_\calA}(\{0\})$ such that
  $\Phi_{\calB, N}(u) \circ f = \Phi_{\calA, N}(u \cdot h)$.
  From \eqref{04.eq.1} we know that $u \cdot h \in \calX'_j$, whence
  $\im(\Phi_{\calA, N}(u \cdot h)) \in \calX_j$ by~\eqref{04.eq.2}.
  Therefore, $\im(\Phi_{\calB, N}(u) \circ f) = \tilde\calA \in \calX_j$.
  
  $(b)$ Directly from $(a)$.
\end{proof}

Using the standard Sierpinski-style coloring obtained by comparing two linear orders
we can deduce the ordering property for a class from the fact that it has the Ramsey property.

\begin{THM}\label{wtc.thm.op}
  Let $\KK^*$ be a subclass of $\OPP$ such that $\Pi_n \in \KK^*$ for all $n \in \NN$. Then $\KK^*$ has the ordering property.
  In particular, the class $\OPP$ has the ordering property (see~\cite{PTW,sokic} for the original proof).
\end{THM}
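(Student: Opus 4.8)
The plan is to derive the ordering property from the Ramsey property of the $\Pi_n$ (Theorem~\ref{wtc.thm.1plus}) by a Sierpinski-style coloring that compares the canonical order $\Boxed\clex$ of a large $\Pi_n$ with an arbitrary valid linear extension. Fix $\calA = (A, \Boxed\sqsubseteq) \in \KK$, where $\KK = \restr{\KK^*}{\Theta}$; since $A$ is finite, only finitely many linearly ordered posets $(\calA, \Boxed<)$ lie in $\KK^*$. Because every finite linearly ordered poset embeds into some $\Pi_m$ (apply Theorem~\ref{wtc.thm.1plus}$(a)$ with the small structure a one-point poset: the resulting arrow statement can only hold if a copy of the big structure sits inside $\Pi_m$), we may fix an $L \ge 3$ such that $(\calA, \Boxed<) \hookrightarrow \Pi_L$ for every $(\calA, \Boxed<) \in \KK^*$. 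Let $\calL$ denote the two-element linearly ordered poset whose two points are incomparable. By Theorem~\ref{wtc.thm.1plus}$(a)$ there is an $N$ with $\Pi_N \longrightarrow (\Pi_L)^{\calL}_2$. I claim that $\calB := \restr{\Pi_N}{\Theta}$, the poset reduct of $\Pi_N$, is a witness for the ordering property for $\calA$; note $\calB \in \KK$ since $\Pi_N \in \KK^*$.

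To verify the claim I would take an arbitrary linear order $\Boxed\sqsubset$ on $B = \calP(\{1, \dots, N\})$ with $(\calB, \Boxed\sqsubset) \in \KK^*$. Both $\Boxed\sqsubset$ and the order $\Boxed\clex$ of $\Pi_N$ are linear extensions of $\Boxed\supseteq$, so they agree on every $\Boxed\supseteq$-comparable pair of sets and can differ only on $\Boxed\supseteq$-incomparable pairs, that is, exactly on the copies of $\calL$ in $\Pi_N$. Color a copy of $\calL$ in $\Pi_N$ by $1$ if $\Boxed\clex$ and $\Boxed\sqsubset$ induce the same order on it, and by $0$ otherwise. By $\Pi_N \longrightarrow (\Pi_L)^{\calL}_2$ there is a $\Boxed\clex$-copy $\widetilde\Pi$ of $\Pi_L$ inside $\Pi_N$ on which this coloring is constant, with some color $c$.

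The crucial step is that $c = 1$. Suppose $c = 0$. Then, within $\widetilde\Pi$, $\Boxed\sqsubset$ coincides with $\Boxed\clex$ on all comparable pairs but reverses it on every incomparable pair. Since $L \ge 3$, $\widetilde\Pi$ contains a $\Boxed\clex$-copy of $\Pi_3$, and a direct inspection of $\Pi_3$ exhibits two $\Boxed\supseteq$-comparable elements $x, y$ together with an element $z$ incomparable to both and lying strictly $\Boxed\clex$-between them (for instance $x = \{1,3\}$, $y = \{3\}$, $z = \{2\}$, so that $x \supseteq y$ and $x \clex z \clex y$). On the corresponding triple of $\widetilde\Pi$ the order $\Boxed\sqsubset$ must keep $x$ before $y$ but swap the two pairs involving $z$, forcing $x \sqsubset y \sqsubset z \sqsubset x$ --- impossible for a linear order. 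Hence $c = 1$, so $\Boxed\clex$ and $\Boxed\sqsubset$ have the same restriction to $\widetilde\Pi$; therefore the substructure $(\widetilde\Pi, \Boxed\supseteq, \Boxed\sqsubset)$ of $(\calB, \Boxed\sqsubset)$ coincides with the substructure $(\widetilde\Pi, \Boxed\supseteq, \Boxed\clex)$ of $\Pi_N$, which is isomorphic to $\Pi_L$. Consequently every $(\calA, \Boxed<) \in \KK^*$ embeds into $\Pi_L \cong (\widetilde\Pi, \Boxed\supseteq, \Boxed\sqsubset) \le (\calB, \Boxed\sqsubset)$, giving $(\calA, \Boxed<) \hookrightarrow (\calB, \Boxed\sqsubset)$, which is exactly (OP). The only genuine obstacle is this last paragraph, i.e.\ excluding the ``disagreeing'' color, and it is resolved by the explicit cyclic configuration in $\Pi_3$ above; the remainder is bookkeeping with Theorem~\ref{wtc.thm.1plus} and the universality of the $\Pi_n$ in $\OPP^\fin$. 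The final assertion, that $\OPP$ has the ordering property, is then immediate since $\Pi_n \in \OPP$ for every $n$.
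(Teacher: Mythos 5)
Your proof is correct, and its engine is the same as the paper's: a Sierpinski-style $2$-coloring of the two-element antichains of a large $\Pi_N$ recording whether the canonical order $\clex$ and an arbitrary linear extension $\sqsubset$ agree, with the ``disagreeing'' color excluded by a comparable pair straddled in the linear order by an element incomparable to both, which would force a $3$-cycle in $\sqsubset$. The packaging, however, is genuinely different, and in two respects cleaner. The paper first reduces (OP) to a one-ordered-structure-at-a-time version (OP$'$), which obliges it to verify (JEP) for $\KK^*$, and then manufactures the obstructing configuration by hand: given $(\calB,\Boxed<)$ it adjoins a new point $z$, incomparable to everything and slipped between a comparable pair in the linear order, and applies $\Pi_N \longrightarrow (\calB_1)^{\calA}_2$ to the augmented structure $\calB_1$ (with a separate trivial case when $\calB$ is an antichain). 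You instead exploit the universality of the $\Pi_n$ (a byproduct of Theorem~\ref{wtc.thm.1plus}$(a)$ that the paper itself records) twice over: a single $\Pi_L$ with $L\ge 3$ absorbs all the finitely many order expansions of a fixed $\calA\in\KK$, so you obtain a witness in the sense of (OP) directly and never need (JEP) or (OP$'$); and $\Pi_L$ already contains the obstructing triple --- your $\{1,3\}\supseteq\{3\}$ with $\{2\}$ lying $\clex$-strictly between them checks out --- so no augmentation step is needed. The steps you flag as routine (the inclusions $\Pi_n \hookrightarrow \Pi_m$ for $n\le m$, the fact that two linear extensions of $\supseteq$ can differ only on antichain pairs, and the transport of the configuration along $\Pi_3 \hookrightarrow \Pi_L \cong \widetilde\Pi$) are indeed routine and correct, so the argument is complete.
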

\begin{proof}
  Let $\KK^*$ be a subclass of $\OPP$ such that $\Pi_n \in \KK^*$ for all $n \in \NN$.
  It is easy to see that (OP) is equivalent to the following whenever $\KK^*$ has (JEP):
  \begin{description}
  \item[(OP')]
    for every $(\calA, \Boxed{<}) \in \KK^*$ there is a $\calB \in \KK$ such that $(\calA, \Boxed{<}) \embedsto (\calB, \Boxed\sqsubset)$
    for every linear order $\sqsubset$ on $B$ with $(\calB, \Boxed\sqsubset) \in \KK^*$.
    We say that $\calB$ is a \emph{witness for the ordering property for $(\calA, \Boxed{<})$}.
  \end{description}
  Let us show that $\KK^*$ has (JEP) so that we can use (OP').
  One of the byproducts of Theorem~\ref{wtc.thm.1plus}~$(a)$ is the following: for every $\calA \in \OPP$ there is an
  $n \in \NN$ such that $\calA \hookrightarrow \Pi_n$. So, take any $\calA, \calB \in \KK^*$. As we have just seen,
  $\calA \hookrightarrow \Pi_n$ and $\calB \hookrightarrow \Pi_m$ for some $n, m \in \NN$. Without loss of generality
  we can take that $n \le m$. Therefore, $\calA \hookrightarrow \Pi_n \hookrightarrow \Pi_m \hookleftarrow \calB$.

  So, let us show (OP') for $\KK^*$.
  Let $\calB = (B, \Boxed\sqsubseteq, \Boxed<)$ be a finite linearly ordered poset. If $(B, \Boxed\sqsubseteq)$ is an antichain
  then $(B, \Boxed\sqsubseteq)$ is a witness for the ordering property for $\calB$. Assume, therefore, that
  $\calB$ is not an antichain and take any $x, y \in B$ such that $x \sqsubset y$. Add a new element $z \notin B$
  to $\calB$ to obtain a finite linearly ordered poset $\calB_1 = (B_1, \Boxed{\sqsubseteq_1}, \Boxed{<_1})$ as follows:
  $B_1 = B \union \{z\}$, $\Boxed{\sqsubseteq_1} = \Boxed\sqsubseteq \union \{(z,z)\}$ 
  (in other words, $z$ is incomparable with every $b \in B$), and $<_1$ is an extension of $<$ such that
  $x \mathrel{<_1} z \mathrel{<_1} y$.
  Let $\calA = (\{0,1\}, \Boxed=, \Boxed<)$ be a two-element linearly ordered antichain ($0 < 1$).
  By Theorem~\ref{wtc.thm.1plus}~$(a)$ there is an $N \in \NN$ such that $\Pi_N \longrightarrow (\calB)^\calA_2$.
  For notational convenience, let $\Pi_N = (\pi_N, \Boxed{\sqsubseteq_N}, \Boxed{<_N})$.
  Let us show that $(\pi_N, \Boxed{\sqsubseteq_N}) \in \KK$ is a witness for the ordering property for $\calB_1$
  and hence a witness for the ordering property for $\calB$ since $\calB \hookrightarrow \calB_1$.
  Take any linear order $\prec$ on $\pi_N$ which extends $\sqsubseteq_N$ and consider the coloring
  $\binom{\Pi_N}{\calA} = X_0 \union X_1$ as follows.
  Let $\tilde\calA = (\{\tilde 0, \tilde 1\}, \Boxed=, \Boxed<) \in \binom{\Pi_N}{\calA}$
  where $\tilde 0 < \tilde 1$. Put
  \begin{align*}
    \tilde A \in X_0 \text{\quad if\quad} &\tilde 0 \mathrel{<_N} \tilde 1 \text{ and } \tilde 0 \prec \tilde 1, \text{ or }
                                    \tilde 1 \mathrel{<_N} \tilde 0 \text{ and } \tilde 1 \prec \tilde 0;\\
    \tilde A \in X_1 \text{\quad  if\quad} &\tilde 0 \mathrel{<_N} \tilde 1 \text{ and } \tilde 1 \prec \tilde 0, \text{ or }
                                    \tilde 1 \mathrel{<_N} \tilde 0 \text{ and } \tilde 0 \prec \tilde 1.
  \end{align*}
  Then there is a $\tilde \calB_1 \in \binom{\calC}{\calB_1}$ such that $\binom{\tilde\calB_1}{\calA}$ is monochromatic.
  Let us show that $\binom{\tilde\calB_1}{\calA} \subseteq X_0$. Suppose, to the contrary, that
  $\binom{\tilde\calB_1}{\calA} \subseteq X_1$. Let $\tilde x, \tilde y, \tilde z$ be the elements of $\tilde\calB_1$
  which correspond to $x, y, z$ in $\calB_1$. Then $\{\tilde x, \tilde z\}$ and $\{\tilde y, \tilde z\}$ are antichains in $\calC$
  such that $\tilde x \mathrel{<_N} \tilde z \mathrel{<_N} \tilde y$ (because this is their order in $\calB_1$) and
  $\tilde x \succ \tilde z \succ \tilde y$ (because $\binom{\tilde\calB_1}{\calA} \subseteq X_1$).
  On the other hand, $\tilde x \mathrel{\sqsubset_N} \tilde y$ -- contradiction with the fact that $\prec$
  extends $\sqsubseteq_N$.
  
  Therefore, $\binom{\tilde\calB_1}{\calA} \subseteq X_0$, which means that $<_N$ and $\prec$ coincide on all 2-element
  antichains of $\tilde\calB_1$. We already know that $<_N$ and $\prec$ coincide on all comparable pairs of elements because $\prec$
  extends $\sqsubseteq_N$. So, $\tilde\calB_1 \le (\pi_N, \Boxed{\sqsubseteq_N}, \Boxed{\prec})$, whence
  $\calB_1 \hookrightarrow (\pi_N, \Boxed{\sqsubseteq_N}, \Boxed{\prec})$.
\end{proof}

With all the technical results in place, we conclude the section by showing that the
``relational alter ego'' of every variety of lattices and every variety of semilattices has both the Ramsey property and
the ordering property. Let us start by introducing a bit of notation.
For a finite lattice $\calL = (L, \Boxed\land, \Boxed\lor)$ let $\rel(\calL) = (L, \Boxed\sqsubseteq)$ be the
corresponding finite poset (where we set $a \sqsubseteq b$ if and only if $a \land b = a$).
With the slight abuse of set notation, for a variety $\VV$ of lattices
let $\rel(\VV^\fin) = \{\rel(\calL) : \calL \in \VV^\fin\}$. On the other hand, let
$\overrightarrow\rel(\VV^\fin)$ denote the class of all the finite linearly ordered posets $(L, \Boxed\sqsubseteq, \Boxed\prec)$
where $(L, \Boxed\sqsubseteq) = \rel(\calL)$ for some $\calL \in \VV^\fin$.
So, the elements of $\overrightarrow\rel(\VV^\fin)$ are all the posets from $\rel(\VV^\fin)$ where each poset is
expanded by all possible linear extensions of the partial order.

Analogously, for a finite semilattice $\calS = (S, \Boxed\land)$ let $\rel(\calS) = (S, \Boxed\sqsubseteq)$ be the
corresponding finite poset(where we set $a \sqsubseteq b$ if and only if $a \land b = a$),
let $\rel(\VV^\fin) = \{\rel(\calS) : \calS \in \VV^\fin\}$ for a variety $\VV$, and let
$\overrightarrow\rel(\VV^\fin)$ denote the class of all the finite linearly ordered posets $(S, \Boxed\sqsubseteq, \Boxed\prec)$
where $(S, \Boxed\sqsubseteq) = \rel(\calS)$ for some $\calS \in \VV^\fin$.

\begin{THM}
  $(a)$ Let $\VV$ be a nontrivial variety of lattices.
  Then $\overrightarrow\rel(\VV^\fin)$ has both the Ramsey property and the ordering property.

  $(b)$ Let $\VV$ be a nontrivial variety of semilattices.
  Then $\overrightarrow\rel(\VV^\fin)$ has both the Ramsey property and the ordering property.
\end{THM}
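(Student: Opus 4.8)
The plan is to recognise each of the two classes $\overrightarrow\rel(\VV^\fin)$ as a subclass of $\OPP$ that already contains all of the ``test structures'' $\Pi_n$, and then to invoke Theorem~\ref{wtc.thm.1plus}$(b)$ and Theorem~\ref{wtc.thm.op} verbatim. By construction every member of $\overrightarrow\rel(\VV^\fin)$ is a finite linearly ordered poset, so $\overrightarrow\rel(\VV^\fin) \subseteq \OPP$; hence in both $(a)$ and $(b)$ it suffices to check that $\Pi_n \in \overrightarrow\rel(\VV^\fin)$ for every $n \in \NN$, where $\Pi_n = \big(\calP(\{1,\ldots,n\}),\Boxed\supseteq,\Boxed\clex\big)$.

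For $(a)$ I would first exhibit the poset underlying $\Pi_n$ as $\rel(\calL_n)$ for a suitable lattice $\calL_n \in \VV^\fin$. Take $\calL_n = \big(\calP(\{1,\ldots,n\}),\Boxed\cup,\Boxed\cap\big)$, i.e.\ the powerset algebra with $\land = \cup$ and $\lor = \cap$; the operations $\cup,\cap$ are idempotent, commutative, associative and satisfy the absorption laws, so $\calL_n$ is a finite distributive lattice, and $a \sqsubseteq b \iff a \cup b = a \iff a \supseteq b$, so that $\rel(\calL_n) = \big(\calP(\{1,\ldots,n\}),\Boxed\supseteq\big)$. Next I would check $\calL_n \in \VV^\fin$: since $\VV$ is nontrivial it contains a lattice of size $\ge 2$, hence (closing under subalgebras) a copy of the two-element chain $\mathbf 2$, hence (closing under $H$, $S$, $P$) the variety $\DD$ generated by $\mathbf 2$; as $\calL_n$ is distributive, $\calL_n \in \DD^\fin \subseteq \VV^\fin$. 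Finally $\Boxed\clex$ extends $\Boxed\supseteq$ straight from the definition of $\clex$, so $\Pi_n$ is a bona fide member of $\overrightarrow\rel(\VV^\fin)$. Theorem~\ref{wtc.thm.1plus}$(b)$ then yields the Ramsey property and Theorem~\ref{wtc.thm.op} the ordering property for $\overrightarrow\rel(\VV^\fin)$.

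Part $(b)$ runs exactly the same way with the meet-semilattice $\calS_n = \big(\calP(\{1,\ldots,n\}),\Boxed\cup\big)$ in place of $\calL_n$: again $\rel(\calS_n) = \big(\calP(\{1,\ldots,n\}),\Boxed\supseteq\big)$, and $\calS_n \in \VV^\fin$ because the variety $\SSS$ of all semilattices has no proper nontrivial subvariety — the two-element semilattice generates $\SSS$ and embeds into every nontrivial semilattice — so a nontrivial variety $\VV$ of semilattices equals $\SSS$ and contains every finite semilattice. Hence $\Pi_n \in \overrightarrow\rel(\VV^\fin)$ for all $n$, and Theorems~\ref{wtc.thm.1plus}$(b)$ and~\ref{wtc.thm.op} finish the argument.

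I do not expect a serious obstacle here; the only points that need a little care, rather than being genuine difficulties, are (i) orienting the operations correctly, so that the reverse-inclusion order $\supseteq$ occurring in $\Pi_n$ really arises from a lattice/semilattice operation (this forces the choice $\land = \cup$, not $\land = \cap$), and (ii) the two structural facts that make the variety immaterial: $\DD$ is the least nontrivial variety of lattices, and $\SSS$ is the only nontrivial variety of semilattices. Everything else is a direct appeal to the machinery already packaged in Theorems~\ref{wtc.thm.1plus} and~\ref{wtc.thm.op}.
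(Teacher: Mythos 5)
Your proposal is correct and takes essentially the same route as the paper: the paper also reduces everything to showing $\Pi_n \in \overrightarrow\rel(\VV^\fin)$, by observing that any nontrivial variety contains the two-element (semi)lattice and hence its finite powers $(\calP(A),\Boxed\cup,\Boxed\cap) \cong \calL_2^{|A|}$, and then invokes Theorems~\ref{wtc.thm.1plus} and~\ref{wtc.thm.op}. Your extra care about orienting the operations so that $\rel$ produces $\supseteq$ rather than $\subseteq$ is a correct and welcome elaboration of a point the paper leaves implicit.
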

\begin{proof}
  $(a)$
  Every nontrivial variety of lattices contains the two-element lattice $\calL_2 = (\{0, 1\}, \Boxed\land, \Boxed\lor)$
  where $0 < 1$, and hence all the finite powers of $\calL_2$.
  Since $(\calP(A), \Boxed\cup, \Boxed\cap) \cong \calL_2^{|A|}$ for every
  finite set $A$, it follows that every nontrivial variety of lattices contains all the lattices of the form
  $(\calP(A), \Boxed\cup, \Boxed\cap)$ where $A$ is a finite set. Therefore,
  $\Pi_n \in \overrightarrow\rel(\VV^\fin)$ for all $n \in \NN$.
  It is now immediate from Theorems~\ref{wtc.thm.1plus} and~\ref{wtc.thm.op} that
  $\overrightarrow\rel(\VV^\fin)$ has the Ramsey property as well as the ordering property.

  $(b)$
  Analogous to $(a)$.
\end{proof}

\section{The weak triangle condition}
\label{wtc.sec.wtc}

The main idea used to prove Theorem~\ref{wtc.thm.op} (adding a strategically placed triangle)
can be generalized to a much larger class of structures, those which satisfy what we call the
\emph{weak triangle condition}. This is a weaker form of the \emph{triangle condition}
introduced in~\cite{KPT} also in connection to understanding the ordering property.

Let $\Theta$ be a first-order language and let $\Boxed< \notin \Theta$
be a new binary relational symbol. Let $\Theta^* = \Theta \union \{\Boxed<\}$.
As usual, given a $\Theta^*$-structure $\calA$ we shall always interpret $<$ in $\calA$ as a linear order on $A$.
Let $\KK^*$ be a class of finite $\Theta^*$-structures and assume that it
is an order expansion of a class $\KK$ of $\Theta$-structures.
Let $S_2(\KK^*)$ denote the class of all the 2-generated structures in $\KK^*$.
We say that $\KK^*$ has the \emph{weak triangle condition} if
\begin{description}
\item[$(W\triangle C)$]
  for every nonempty finite $\Sigma \subseteq S_2(\KK^*)$ there is a $\tau \in S_2(\KK^*)$ such that
  for every $\sigma \in \Sigma$ there exists a $(\calD, \Boxed{<_D}) \in \KK^*$ and $x, y, z \in D$
  such that $x \mathrel{<_D} y \mathrel{<_D} z$, $\langle x, z \rangle_{(\calD, \Boxed{<_D})} \cong \sigma$ and
  $\langle x, y \rangle_{(\calD, \Boxed{<_D})} \cong \langle y, z \rangle_{(\calD, \Boxed{<_D})} \cong \tau$.
\end{description}

\begin{THM}\label{wtc.thm.RP-OP}
  Let $\Theta$ be a first-order language and let $\Boxed< \notin \Theta$
  be a new binary relational symbol. Let $\Theta^* = \Theta \union \{\Boxed<\}$.
  Let $\KK^*$ be a Ramsey age of finite $\Theta^*$-structures which
  has the weak triangle condition. Then $\KK^*$ has the ordering property.
\end{THM}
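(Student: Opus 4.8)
The plan is to run the argument of Theorem~\ref{wtc.thm.op} with the ad hoc ``add a strategically placed triangle'' step replaced by the weak triangle condition. Since $\KK^*$ is a Ramsey age it has (JEP) and, being a \Fraisse\ age, also (AP); hence, exactly as in the proof of Theorem~\ref{wtc.thm.op}, it suffices to produce for each fixed $\calA \in \KK = \restr{\KK^*}{\Theta}$ a single $\Theta$-structure witnessing (OP) for $\calA$. Assume $|A| \ge 2$ (otherwise $\calA$ is its own witness). First I would list all linear orders $<^{(1)}, \ldots, <^{(p)}$ on $A$ with $(\calA, \Boxed{<^{(i)}}) \in \KK^*$, let $\Sigma$ be the set of isomorphism types of the $2$-generated substructures of $(\calA, \Boxed{<^{(1)}}), \ldots, (\calA, \Boxed{<^{(p)}})$ --- a nonempty finite subset of $S_2(\KK^*)$ by (HP) --- and feed $\Sigma$ into $(W\triangle C)$ to obtain $\tau \in S_2(\KK^*)$ together with, for each $\sigma \in \Sigma$, a structure $(\calD_\sigma, \Boxed{<_{D_\sigma}}) \in \KK^*$ carrying a triangle $x_\sigma <_{D_\sigma} y_\sigma <_{D_\sigma} z_\sigma$ with $\langle x_\sigma, z_\sigma\rangle \cong \sigma$ and $\langle x_\sigma, y_\sigma\rangle \cong \langle y_\sigma, z_\sigma\rangle \cong \tau$. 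Note $\tau$ is rigid, being a structure carrying a linear order.

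Next I would assemble, by iterated amalgamation, a structure $(\calA_1, \Boxed{<_1}) \in \KK^*$ that contains an isomorphic copy of every $(\calA, \Boxed{<^{(i)}})$ and, over each such copy, a $\tau$-bridge over every pair of its elements: if $a <^{(i)} a'$, with images $\bar a, \bar{a'}$ in the $i$-th copy, then there is $\bar y \in A_1$ with $\bar a <_1 \bar y <_1 \bar{a'}$ and $\langle \bar a, \bar y\rangle_{\calA_1} \cong \langle \bar y, \bar{a'}\rangle_{\calA_1} \cong \tau$. One builds $\calA_1$ by gluing the $p$ copies of $\calA$ together with (JEP) and then, pair by pair, amalgamating a copy of the appropriate $\calD_\sigma$ over $\{\bar a, \bar{a'}\}$ with (AP), using the isomorphism $\langle x_\sigma, z_\sigma\rangle \cong \sigma \cong \langle \bar a, \bar{a'}\rangle_{\calA_1}$; since embeddings preserve generated substructures, bridges once added persist and the $2$-generated type of a pair never changes. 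As $\tau \hookrightarrow \calD_\sigma \hookrightarrow (\calA_1, \Boxed{<_1})$, the Ramsey property of $\KK^*$ then furnishes $(\calC, \Boxed{<_C}) \in \KK^*$ with $(\calC, \Boxed{<_C}) \longrightarrow \big((\calA_1, \Boxed{<_1})\big)^\tau_2$ and $(\calA_1, \Boxed{<_1}) \hookrightarrow (\calC, \Boxed{<_C})$; I claim the $\Theta$-reduct $\calC^-$ of $\calC$ witnesses (OP) for $\calA$.

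To prove the claim I would take any linear order $\prec$ with $(\calC^-, \Boxed\prec) \in \KK^*$ and colour each copy $\calT \in \binom{(\calC, \Boxed{<_C})}{\tau}$ in Sierpinski fashion: fixing once and for all a generating pair $(u,v)$ of $\tau$ with $u <_\tau v$, let $(s,t)$ be its image under the unique isomorphism $\tau \to \calT$ (so $s <_C t$) and colour $\calT$ by $0$ if $s \prec t$ and by $1$ if $t \prec s$. The partition arrow yields a copy $\tilde\calA_1 \cong (\calA_1, \Boxed{<_1})$ in $(\calC, \Boxed{<_C})$, say via an isomorphism $\psi\colon (\calA_1, \Boxed{<_1}) \to \tilde\calA_1$, all of whose $\tau$-subcopies have a common colour $\varepsilon \in \{0,1\}$. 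Then for every $i$ and every pair $a <^{(i)} a'$ with bridge $\bar y$, both $\tau$-copies $\psi(\langle \bar a, \bar y\rangle_{\calA_1})$ and $\psi(\langle \bar y, \bar{a'}\rangle_{\calA_1})$ lie in $\tilde\calA_1$ and have colour $\varepsilon$. If $\varepsilon = 0$ this forces $\psi(\bar a) \prec \psi(\bar y) \prec \psi(\bar{a'})$ for every such pair, so the $\Theta$-embedding $\calA \to \calC^-$ given by the $i$-th copy carries $<^{(i)}$ to $\prec$; hence $(\calA, \Boxed{<^{(i)}}) \hookrightarrow (\calC^-, \Boxed\prec)$. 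If $\varepsilon = 1$ the inequalities reverse, so that same $\Theta$-embedding carries the reversed order $(<^{(i)})^{\mathrm{rev}}$ to $\prec$; consequently $(<^{(i)})^{\mathrm{rev}}$ is itself admissible (by (HP)), the family $<^{(1)}, \ldots, <^{(p)}$ is closed under reversal, and applying the $\varepsilon = 1$ conclusion to $(<^{(i)})^{\mathrm{rev}}$ again gives $(\calA, \Boxed{<^{(i)}}) \hookrightarrow (\calC^-, \Boxed\prec)$. Thus in all cases every admissible expansion of $\calA$ embeds into $(\calC^-, \Boxed\prec)$, which is (OP) for $\calA$.

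The real obstacle is this last colour dichotomy. In Theorem~\ref{wtc.thm.op} the hypothesis that the added linear order extend the partial order eliminated the ``wrong'' colour $\varepsilon = 1$ outright; here $\varepsilon = 1$ cannot be ruled out in general, and the fix --- pre-loading $\calA_1$ with copies of $(\calA, \Boxed{<^{(i)}})$ for \emph{all} admissible orders and noting that colour $1$ forces the admissible family to be reversal-closed --- is the genuinely new ingredient. A secondary point needing care is the coherent bookkeeping: the colouring must be set up so that each bridge is compared on ``the right'' pair of elements (immediate when $\tau$ is two-element, since then a copy of $\tau$ has a unique pair to compare), and one must check that the successive amalgamations carried out inside $\KK^*$ via (JEP) and (AP) do not destroy previously inserted bridges; both are routine modulo (HP), (JEP) and (AP), but deserve to be spelled out.
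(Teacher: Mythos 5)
Your proof is correct and runs on the same engine as the paper's: the bridge construction driven by $(W\triangle C)$ (the paper packages it as the $\nabla$ construction), the Sierpinski colouring of copies of $\tau$ against the second linear order, and an application of the Ramsey property with $\tau$ as the small structure. Where you genuinely depart from the paper is the endgame, i.e.\ how the ``wrong'' colour is disposed of. The paper splits into two cases according to whether $(\calA, \Boxed{>_A}) \in \KK^*$: if not, the colour forcing reversal would put $(\calA, \Boxed{>_A})$ into $\KK^*$ by (HP), a contradiction; if so, it first embeds $(\calA, \Boxed{<_A})$ and $(\calA, \Boxed{>_A})$ jointly into some $(\calB, \Boxed{<_B})$ by (JEP), so that $(\calA, \Boxed{<_A})$ embeds into both $(\calB, \Boxed{<_B})$ and $(\calB, \Boxed{>_B})$ and either colour wins. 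You avoid the case split by pre-loading the amalgam with copies of $(\calA, \Boxed{<^{(i)}})$ for \emph{all} admissible orders and observing that colour $1$ forces, via (HP), the admissible family to be closed under reversal, after which the conclusion for $<^{(i)}$ is read off the copy carrying $(<^{(i)})^{\mathrm{rev}}$. The two resolutions are logically equivalent; yours directly produces a single witness valid for all admissible orders on $A$, i.e.\ (OP) itself rather than the (JEP)-equivalent (OP') the paper actually verifies, at the cost of a larger amalgam. One loose end you share with the paper: when $\Theta$ contains function symbols a copy of $\tau$ may admit several generating pairs, so the colour of a copy must be tied to a fixed pair and the bridges must be compared on that same pair; you flag this explicitly (and it is vacuous in the relational case, where $\tau$ is a two-element structure), but the promised bookkeeping is the one place where ``routine'' should actually be spelled out.
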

\begin{proof}
  Before we move on to the proof, let us describe a construction that this proof relies on.
  Take any $(\calB, \Boxed{<_B}) \in \KK^*$ and let $(a_i, b_i)$, $1 \le i \le n$, be an
  enumeration of all the pairs of elements of $B$ satisfying $a_i \mathrel{<_B} b_i$. Let $\sigma_i =
  \langle a_i, b_i \rangle_{(\calB, \Boxed{<_B})}$, $1 \le i \le n$.
  Put $\Sigma = \{\sigma_i : 1 \le i \le n\}$.
  By $(W\triangle C)$ there exists a $\tau \in S_2(\KK^*)$,
  and for every $i \in \{1, \ldots, n\}$ there exist $(\calD_i, \Boxed{<_i}) \in \KK^*$ and $x_i, y_i, z_i \in D_i$
  such that $x_i \mathrel{<_i} y_i \mathrel{<_i} z_i$, $\langle x_i, z_i \rangle_{(\calD_i, \Boxed{<_i})} \cong \sigma_i$ and
  $\langle x_i, y_i \rangle{(\calD_i, \Boxed{<_i})} \cong \langle y_i, z_i \rangle_{(\calD_i, \Boxed{<_i})} \cong \tau$.
  We now perform $n$ amalgamations inductively
  as follows. Put $(\calB_0, \Boxed{<_{B_0}}) = (\calB, \Boxed{<_B})$.
  Assume that $(\calB_{i-1}, \Boxed{<_{B_{i-1}}})$ has been constructed. In the $i$th step
  amalgamate $(\calB_{i-1}, \Boxed{<_{B_{i-1}}})$ with $(\calD_i, \Boxed{<_i})$ over $\sigma_i$ and embeddings
  $f_i : \sigma_i \hookrightarrow (\calB_{i-1}, \Boxed{<_{B_{i-1}}}) : a_i \mapsto a_i, b_i \mapsto b_i$ and
  $g_i : \sigma_i \hookrightarrow (\calD_i, \Boxed{<_i}) : a_i \mapsto x_i, b_i \mapsto z_i$, and denote the amalgam by
  $(\calB_i, \Boxed{<_{B_i}})$, Fig.~\ref{wtc.fig.nabla}.
  Without loss of generality we can assume that $(\calB, \Boxed{<_B}) \le (\calB_i, \Boxed{<_{B_i}})$ so that
  the procedure can continue as described. Let us denote the final amalgam $(\calB_n, \Boxed{<_{B_n}})$ by $\nabla (\calB, \Boxed{<_B})$.
  Note that $\nabla (\calB, \Boxed{<_B}) \in \KK^*$.
  Without loss of generality we can assume that $(\calB, \Boxed{<_B}) \le \nabla(\calB, \Boxed{<_B})$.
  
  \begin{figure}
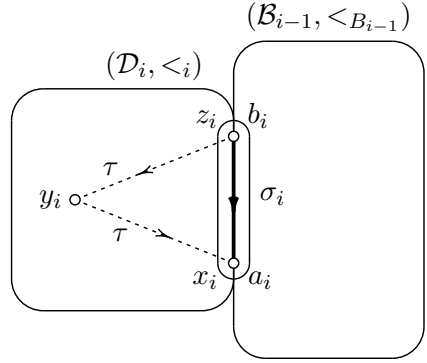

    \centering
    %% DO NOT FORGET TO \usepackage{pgf,acadpgf}
\begin{pgfpicture}
  \pgfsetxvec{\pgfpoint{\acadpgfunit}{0pt}}
  \pgfsetyvec{\pgfpoint{0pt}{\acadpgfunit}}
  \pgfsetlinewidth{\acadpgflinewidth}
  \pgftransformshift{\pgfpointxy{62.5}{100.0}}

  \begin{pgfscope}
    \pgfsetdash{{1.5pt}{2pt}}{0pt}
    \pgfpathmoveto{\pgfpointxy{500.0}{500.0}}
    \pgfpathlineto{\pgfpointxy{250.0}{400.0}}
    \pgfusepath{stroke}
  \end{pgfscope}
  \begin{pgfscope}
    \pgfsetdash{{1.5pt}{2pt}}{0pt}
    \pgfpathmoveto{\pgfpointxy{250.0}{400.0}}
    \pgfpathlineto{\pgfpointxy{500.0}{300.0}}
    \pgfusepath{stroke}
  \end{pgfscope}
  \begin{pgfscope}
    \pgfpathmoveto{\pgfpointxy{381.348}{353.954}}
    \pgfpatharcaxes{218.199}{248.199}{\pgfpointxy{45.0}{0.0}}{\pgfpointxy{0.0}{45.0}}
    \pgfusepath{stroke}
  \end{pgfscope}
  \begin{pgfscope}
    \pgfpathmoveto{\pgfpointxy{400.0}{340.0}}
    \pgfpatharcaxes{68.1986}{98.1986}{\pgfpointxy{45.0}{0.0}}{\pgfpointxy{0.0}{45.0}}
    \pgfusepath{stroke}
  \end{pgfscope}
  \begin{pgfscope}
    \pgfpathmoveto{\pgfpointxy{373.13}{442.759}}
    \pgfpatharcaxes{81.8014}{111.801}{\pgfpointxy{45.0}{0.0}}{\pgfpointxy{0.0}{45.0}}
    \pgfusepath{stroke}
  \end{pgfscope}
  \begin{pgfscope}
    \pgfpathmoveto{\pgfpointxy{350.0}{440.0}}
    \pgfpatharcaxes{291.801}{321.801}{\pgfpointxy{45.0}{0.0}}{\pgfpointxy{0.0}{45.0}}
    \pgfusepath{stroke}
  \end{pgfscope}
  \begin{pgfscope}
    \pgfpathmoveto{\pgfpointxy{500.0}{523.062}}
    \pgfpathlineto{\pgfpointxy{500.0}{276.938}}
    \pgfusepath{stroke}
  \end{pgfscope}
  \begin{pgfscope}
    \pgfpathmoveto{\pgfpointxy{500.0}{600.0}}
    \pgfpathlineto{\pgfpointxy{500.0}{200.0}}
    \pgfusepath{stroke}
  \end{pgfscope}
  \begin{pgfscope}
    \pgfpathmoveto{\pgfpointxy{500.0}{200.0}}
    \pgfpatharcaxes{180.0}{270.0}{\pgfpointxy{50.0}{0.0}}{\pgfpointxy{0.0}{50.0}}
    \pgfusepath{stroke}
  \end{pgfscope}
  \begin{pgfscope}
    \pgfpathmoveto{\pgfpointxy{550.0}{150.0}}
    \pgfpathlineto{\pgfpointxy{750.0}{150.0}}
    \pgfusepath{stroke}
  \end{pgfscope}
  \begin{pgfscope}
    \pgfpathmoveto{\pgfpointxy{750.0}{150.0}}
    \pgfpatharcaxes{-90.0}{0.0}{\pgfpointxy{50.0}{0.0}}{\pgfpointxy{0.0}{50.0}}
    \pgfusepath{stroke}
  \end{pgfscope}
  \begin{pgfscope}
    \pgfpathmoveto{\pgfpointxy{800.0}{200.0}}
    \pgfpathlineto{\pgfpointxy{800.0}{600.0}}
    \pgfusepath{stroke}
  \end{pgfscope}
  \begin{pgfscope}
    \pgfpathmoveto{\pgfpointxy{800.0}{600.0}}
    \pgfpatharcaxes{0.0}{90.0}{\pgfpointxy{50.0}{0.0}}{\pgfpointxy{0.0}{50.0}}
    \pgfusepath{stroke}
  \end{pgfscope}
  \begin{pgfscope}
    \pgfpathmoveto{\pgfpointxy{750.0}{650.0}}
    \pgfpathlineto{\pgfpointxy{550.0}{650.0}}
    \pgfusepath{stroke}
  \end{pgfscope}
  \begin{pgfscope}
    \pgfpathmoveto{\pgfpointxy{550.0}{650.0}}
    \pgfpatharcaxes{90.0}{180.0}{\pgfpointxy{50.0}{0.0}}{\pgfpointxy{0.0}{50.0}}
    \pgfusepath{stroke}
  \end{pgfscope}
  \begin{pgfscope}
    \pgfsetlinewidth{0.50mm}
    \pgfpathmoveto{\pgfpointxy{500.0}{492.0}}
    \pgfpathlineto{\pgfpointxy{500.0}{308.0}}
    \pgfusepath{stroke}
  \end{pgfscope}
  \begin{pgfscope}
    \pgfsetlinewidth{0.50mm}
    \pgfpathmoveto{\pgfpointxy{506.029}{404.1}}
    \pgfpatharcaxes{150.0}{180.0}{\pgfpointxy{45.0}{0.0}}{\pgfpointxy{0.0}{45.0}}
    \pgfusepath{stroke}
  \end{pgfscope}
  \begin{pgfscope}
    \pgfsetlinewidth{0.50mm}
    \pgfpathmoveto{\pgfpointxy{500.0}{381.6}}
    \pgfpatharcaxes{0.0}{30.0}{\pgfpointxy{45.0}{0.0}}{\pgfpointxy{0.0}{45.0}}
    \pgfusepath{stroke}
  \end{pgfscope}
  \begin{pgfscope}
    \pgfpathmoveto{\pgfpointxy{525.0}{500.0}}
    \pgfpathlineto{\pgfpointxy{525.0}{300.0}}
    \pgfusepath{stroke}
  \end{pgfscope}
  \begin{pgfscope}
    \pgfpathmoveto{\pgfpointxy{475.0}{300.0}}
    \pgfpathlineto{\pgfpointxy{475.0}{500.0}}
    \pgfusepath{stroke}
  \end{pgfscope}
  \begin{pgfscope}
    \pgfpathmoveto{\pgfpointxy{450.0}{575.0}}
    \pgfpathlineto{\pgfpointxy{200.0}{575.0}}
    \pgfusepath{stroke}
  \end{pgfscope}
  \begin{pgfscope}
    \pgfpathmoveto{\pgfpointxy{450.0}{225.0}}
    \pgfpathlineto{\pgfpointxy{200.0}{225.0}}
    \pgfusepath{stroke}
  \end{pgfscope}
  \begin{pgfscope}
    \pgfpathmoveto{\pgfpointxy{150.0}{525.0}}
    \pgfpathlineto{\pgfpointxy{150.0}{275.0}}
    \pgfusepath{stroke}
  \end{pgfscope}
  \begin{pgfscope}
    \pgfpathmoveto{\pgfpointxy{200.0}{575.0}}
    \pgfpatharcaxes{90.0}{180.0}{\pgfpointxy{50.0}{0.0}}{\pgfpointxy{0.0}{50.0}}
    \pgfusepath{stroke}
  \end{pgfscope}
  \begin{pgfscope}
    \pgfpathmoveto{\pgfpointxy{150.0}{275.0}}
    \pgfpatharcaxes{180.0}{270.0}{\pgfpointxy{50.0}{0.0}}{\pgfpointxy{0.0}{50.0}}
    \pgfusepath{stroke}
  \end{pgfscope}
  \begin{pgfscope}
    \pgfpathmoveto{\pgfpointxy{450.0}{225.0}}
    \pgfpatharcaxes{-90.0}{0.0}{\pgfpointxy{50.0}{0.0}}{\pgfpointxy{0.0}{50.0}}
    \pgfusepath{stroke}
  \end{pgfscope}
  \begin{pgfscope}
    \pgfpathmoveto{\pgfpointxy{500.0}{525.0}}
    \pgfpatharcaxes{0.0}{90.0}{\pgfpointxy{50.0}{0.0}}{\pgfpointxy{0.0}{50.0}}
    \pgfusepath{stroke}
  \end{pgfscope}
  \begin{pgfscope}
    \pgfpathmoveto{\pgfpointxy{525.0}{500.0}}
    \pgfpatharcaxes{0.0}{180.0}{\pgfpointxy{25.0}{0.0}}{\pgfpointxy{0.0}{25.0}}
    \pgfusepath{stroke}
  \end{pgfscope}
  \begin{pgfscope}
    \pgfpathmoveto{\pgfpointxy{475.0}{300.0}}
    \pgfpatharcaxes{-180.0}{0.0}{\pgfpointxy{25.0}{0.0}}{\pgfpointxy{0.0}{25.0}}
    \pgfusepath{stroke}
  \end{pgfscope}
  \begin{pgfscope}
    \pgfsetfillcolor{white}
    \pgfpathellipse{\pgfpointxy{500.0}{500.0}}{\pgfpointxy{8.0}{0.0}}{\pgfpointxy{0.0}{8.0}}
    \pgfusepath{fill,stroke}
  \end{pgfscope}
  \begin{pgfscope}
    \pgfsetfillcolor{white}
    \pgfpathellipse{\pgfpointxy{500.0}{300.0}}{\pgfpointxy{8.0}{0.0}}{\pgfpointxy{0.0}{8.0}}
    \pgfusepath{fill,stroke}
  \end{pgfscope}
  \begin{pgfscope}
    \pgfsetfillcolor{white}
    \pgfpathellipse{\pgfpointxy{250.0}{400.0}}{\pgfpointxy{8.0}{0.0}}{\pgfpointxy{0.0}{8.0}}
    \pgfusepath{fill,stroke}
  \end{pgfscope}
  \pgftext[bottom,left,at={\pgfpointxy{521.795}{513.924}}]{$b_i$}
  \pgftext[bottom,right,at={\pgfpointxy{475.27}{513.804}}]{$z_i$}
  \pgftext[top,left,at={\pgfpointxy{523.109}{286.695}}]{$a_i$}
  \pgftext[top,right,at={\pgfpointxy{475.466}{286.015}}]{$x_i$}
  \pgftext[right,at={\pgfpointxy{230.0}{400.0}}]{$y_i$}
  \pgftext[left,at={\pgfpointxy{541.273}{405.695}}]{$\sigma_i$}
  \pgftext[bottom,at={\pgfpointxy{650.0}{662.0}}]{$(\calB_{i-1}, \Boxed{<_{B_{i-1}}})$}
  \pgftext[bottom,right,at={\pgfpointxy{452.048}{588.888}}]{$(\calD_i, \Boxed{<_i})$}
  \pgftext[top,right,at={\pgfpointxy{334.073}{355.171}}]{$\tau$}
  \pgftext[bottom,right,at={\pgfpointxy{318.985}{438.794}}]{$\tau$}
\end{pgfpicture}
    \caption{The $\nabla$ construction}
    \label{wtc.fig.nabla}
  \end{figure}
  
  Take any $(\calA, \Boxed{<_A}) \in \KK^*$.

  \medskip
  
  Case 1: $(\calA, \Boxed{>_A}) \notin \KK^*$.
  
  \medskip

  Let $(\calC, \Boxed{<_C}) = \nabla (\calA, \Boxed{<_A})$. By the assumption, $(\calA, \Boxed{<_A}) \le (\calC, \Boxed{<_C})$.
  Since $K^*$ has the Ramsey property there is a $(\calQ, \Boxed{<_Q}) \in \KK^*$ such that
  $(\calQ, \Boxed{<_Q}) \longrightarrow (\calC, \Boxed{<_C})^\tau_2$.
  Let us show that $\calQ$ is the witness for the
  ordering property for $(\calA, \Boxed{<_A})$. Take any linear order $\sqsubset$ on $Q$ such that $(\calQ, \Boxed\sqsubset) \in \KK^*$.
  Consider the following coloring $\binom{(\calQ, \Boxed{<_Q})}{\tau} = \calX_1 \union \calX_2$: for $q, r \in Q$ such that
  $q \mathrel{<_Q} r$ and $\langle q, r \rangle_{(\calQ, \Boxed{<_Q})} \cong \tau$ put
  $\langle q, r \rangle_{(\calQ, \Boxed{<_Q})} \in \calX_1$ if $q \sqsubset r$, and put
  $\langle q, r \rangle_{(\calQ, \Boxed{<_Q})} \in \calX_2$ if $q \sqsupset r$.
  Then there is a monochromatic copy $(\tilde \calC, \Boxed{<_{\tilde C}})$ of $(\calC, \Boxed{<_C})$ in $(\calQ, \Boxed{<_Q})$.
  Let $(\tilde \calA, \Boxed{<_{\tilde A}})$ be a copy of $(\calA, \Boxed{<_A})$ in $(\tilde \calC, \Boxed{<_{\tilde C}})$.
  
  Let us first show that $\binom{(\tilde \calC, \Boxed{<_{\tilde C}})}{\tau} \subseteq \calX_2$ cannot happen.
  Assume, to the contrary, that this is the case. Then we can show that
  for all $\tilde a, \tilde b \in \tilde A$
  we have that $\tilde a \mathrel{<_{\tilde A}} \tilde b$ if and only if $\tilde a \sqsupset \tilde b$.
  Assume that $\tilde a \mathrel{<_{\tilde A}} \tilde b$. Then $(\tilde a, \tilde b)$ is a copy in $(\tilde \calA, \Boxed{<_{\tilde A}})$
  of some pair $(a_i, b_i)$ (see the beginning of the $\nabla$ construction). By the construction, there is a $\tilde y \in \tilde C$
  such that $\tilde a \mathrel{<_{\tilde C}} \tilde y \mathrel{<_{\tilde C}} \tilde b$ and
  $\langle \tilde a, \tilde y \rangle_{(\tilde \calC, \Boxed{<_{\tilde C}})} \cong
  \langle \tilde y, \tilde b \rangle_{(\tilde \calC, \Boxed{<_{\tilde C}})} \cong \tau$, see Fig.~\ref{wtc.fig.nabla}.
  From $\binom{(\tilde \calC, \Boxed{<_{\tilde C}})}{\tau} \subseteq \calX_2$ it now follows
  that $\tilde a \sqsupset \tilde y \sqsupset \tilde b$. But, then we have that
  $(\tilde \calA, \Boxed{>_{\tilde A}}) \cong \langle \tilde A \rangle_{(\calQ, \Boxed\sqsubset)}$ whence
  $(\calA, \Boxed{>_A}) \hookrightarrow (\calQ, \Boxed{\sqsubset})$. Since $(\calQ, \Boxed{\sqsubset}) \in \KK^*$
  we conclude $(\calA, \Boxed{>_A}) \in \KK^*$. Contradiction.

  Therefore, $\binom{(\tilde \calC, \Boxed{<_{\tilde C}})}{\tau} \subseteq \calX_1$.
  We can now repeat the same argument to show that
  $(\tilde \calA, \Boxed{<_{\tilde A}}) \cong \langle \tilde A \rangle_{(\calQ, \Boxed\sqsubset)}$ whence
  $(\calA, \Boxed{<_A}) \hookrightarrow (\calQ, \Boxed{\sqsubset})$.

  \medskip
  
  Case 2: $(\calA, \Boxed{>_A}) \in \KK^*$.
  
  \medskip

  Let $(\calB, \Boxed{<_B})$ be a structure which embeds both $(\calA, \Boxed{<_A})$ and $(\calA, \Boxed{>_A})$.
  Then $(\calA, \Boxed{<_A}) \hookrightarrow (\calB, \Boxed{<_{B}})$ and
  $(\calA, \Boxed{<_A}) \hookrightarrow (\calB, \Boxed{>_{B}})$.
  Let $(\calC, \Boxed{<_C}) = \nabla (\calB, \Boxed{<_B})$.
  By the Ramsey property there is a $(\calQ, \Boxed{<_Q}) \in \KK^*$ such that
  $(\calQ, \Boxed{<_Q}) \longrightarrow (\calC, \Boxed{<_C})^\tau_2$. Let us show that $\calQ$ is the witness for the
  ordering property for $(\calA, \Boxed{<_A})$. Take any linear order $\sqsubset$ on $Q$ such that $(\calQ, \Boxed\sqsubset) \in \KK^*$
  and construct the coloring $\binom{(\calQ, \Boxed{<_Q})}{\tau} = \calX_1 \union \calX_2$ as in Case~1.
  Then there is a monochromatic copy $(\tilde \calC, \Boxed{<_{\tilde C}})$ of $(\calC, \Boxed{<_C})$ in $(\calQ, \Boxed{<_Q})$.
  Let $(\tilde \calB, \Boxed{<_{\tilde B}})$ be a copy of $(\calB, \Boxed{<_B})$ in $(\tilde \calC, \Boxed{<_{\tilde C}})$.

  If $\binom{(\tilde \calC, \Boxed{<_{\tilde C}})}{\tau} \subseteq \calX_1$
  we can repeat the argument from Case 1 to show that
  $(\tilde \calB, \Boxed{<_{\tilde B}}) \cong \langle \tilde B \rangle_{(\calQ, \Boxed\sqsubset)}$ whence
  $(\calA, \Boxed{<_A}) \hookrightarrow (\tilde \calB, \Boxed{<_{\tilde B}}) \hookrightarrow (\calQ, \Boxed{\sqsubset})$.
  If, however, $\binom{(\tilde \calC, \Boxed{<_{\tilde C}})}{\tau} \subseteq \calX_2$ then
  $(\tilde \calB, \Boxed{>_{\tilde B}}) \cong \langle \tilde B \rangle_{(\calQ, \Boxed\sqsubset)}$ whence
  $(\calA, \Boxed{<_A}) \hookrightarrow (\tilde \calB, \Boxed{>_{\tilde B}}) \hookrightarrow (\calQ, \Boxed{\sqsubset})$.
\end{proof}

As an application of the above theorem let us consider structures with several poset relations (cf.~\cite{draganic-masul-mulitposets}).
Let $\calT = (\{1, \ldots, n\}, \Boxed\preccurlyeq)$ be a poset, $n \ge 1$, which we refer to as a \emph{template}.
A structure $(A, \Boxed\sqsubseteq_1, \ldots, \Boxed\sqsubseteq_n)$ consisting of $n$ partial orders
\emph{conforms to the template $\calT$} if $(\sqsubseteq_i) \subseteq (\sqsubseteq_j)$ whenever $i \preccurlyeq j$ in~$\calT$;
and is \emph{consistent} if there is a linear order on $A$ which extends each $\sqsubseteq_i$, $1 \le i \le n$.
(Note that $(A, \Boxed\sqsubseteq_1, \ldots, \Boxed\sqsubseteq_n)$ is consistent if and only if there do not exist
distinct $a, b$ and distinct $i, j$ such that $a \mathrel{\sqsubset_i} b$ and $b \mathrel{\sqsubset_j} a$.)
Given a template $\calT$, let $\PP_\calT$ denote the class of all the structures $(A, \Boxed\sqsubseteq_1, \ldots, \Boxed\sqsubseteq_n)$
(of all cardinalities) consisting of $n$ partial orders which conform to $\calT$ and which are consistent; and let
$\OPP_\calT$ denote the class of all the structures $(A, \Boxed\sqsubseteq_1, \ldots, \Boxed\sqsubseteq_n, \Boxed<)$
such that $(A, \Boxed\sqsubseteq_1, \ldots, \Boxed\sqsubseteq_n) \in \PP_\calT$ and $<$ is a linear order
which extends each $\sqsubseteq_i$, $1 \le i \le n$. (Note that $\PP(1) = \PP$ and $\OPP(1) = \OPP$, where
$1$ denotes the trivial one-element template.)

\begin{COR}\label{wtc.cor.PT}
  For every finite poset $\calT$ with $T = \{1, \ldots, n\}$ the class $\OPP^\fin_\calT$ has the ordering property.
\end{COR}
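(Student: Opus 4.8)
The plan is to obtain this as a direct application of Theorem~\ref{wtc.thm.RP-OP}: I would verify that $\OPP^\fin_\calT$ is a Ramsey age satisfying the weak triangle condition, and then invoke that theorem. Two of the three properties needed for ``Ramsey age'' are routine. For (HP), restricting a consistent family of conforming partial orders, together with a compatible linear order, to a subset again yields a member of $\OPP^\fin_\calT$. For (JEP), given $(\calA_1,\dots,\Boxed{<_1})$ and $(\calA_2,\dots,\Boxed{<_2})$ in $\OPP^\fin_\calT$, I would take the disjoint union, keep each $\sqsubseteq_i$ inside its block and add no cross-block relations, and let the linear order be $<_1$ on $A_1$ followed by $<_2$ on $A_2$; this is consistent, conforms to $\calT$, and contains both factors as substructures. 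The genuinely substantial ingredient is the Ramsey property (RP) for $\OPP^\fin_\calT$, which I would take from \cite{draganic-masul-mulitposets} (alternatively it can be proved by a Graham--Rothschild argument in the spirit of Theorem~\ref{wtc.thm.1plus}). Since (RP) together with (JEP) yields (AP) \cite{Nesetril}, $\OPP^\fin_\calT$ is then a Ramsey age, and in particular has the amalgamation property that the $\nabla$-construction inside the proof of Theorem~\ref{wtc.thm.RP-OP} silently uses.

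What remains is the weak triangle condition, and here the description of $2$-generated structures does all the work. A member of $S_2(\OPP^\fin_\calT)$ is a $2$-element structure $(\{a,b\},\Boxed{\sqsubseteq_1},\dots,\Boxed{\sqsubseteq_n},\Boxed<)$ with $a<b$; since $<$ extends every $\sqsubseteq_i$, it is determined by $U=\{\,i : a\sqsubset_i b\,\}$, and conformity to $\calT$ is equivalent to $U$ being an up-set of $\calT$. I would take $\tau\in S_2(\OPP^\fin_\calT)$ to be the type with $U=\0$, i.e.\ the $2$-element $<$-chain in which each $\sqsubseteq_i$ is trivial; this type exists for every template. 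Then, given a nonempty finite $\Sigma\subseteq S_2(\OPP^\fin_\calT)$ and any $\sigma\in\Sigma$ with associated up-set $U_\sigma$, I would let $\calD_\sigma$ be the structure on $\{x,y,z\}$ with $x<y<z$ whose only strict relations are $x\sqsubset_i z$ for $i\in U_\sigma$. Each $\sqsubseteq_i$ is then trivially a partial order, the family conforms to $\calT$ precisely because $U_\sigma$ is an up-set, and $x<y<z$ extends every $\sqsubseteq_i$, so $\calD_\sigma\in\OPP^\fin_\calT$; moreover $\langle x,z\rangle_{\calD_\sigma}\cong\sigma$ while $\langle x,y\rangle_{\calD_\sigma}\cong\langle y,z\rangle_{\calD_\sigma}\cong\tau$ because $y$ is incomparable to everything in every $\sqsubseteq_i$. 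Hence $(W\triangle C)$ holds, with this single $\tau$ serving all $\sigma\in\Sigma$.

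Putting the two parts together, $\OPP^\fin_\calT$ is a Ramsey age with the weak triangle condition, so Theorem~\ref{wtc.thm.RP-OP} yields the ordering property for $\OPP^\fin_\calT$. I expect the only real obstacle to be the Ramsey property for $\OPP^\fin_\calT$ itself: for a self-contained proof one would have to adapt the Graham--Rothschild machinery of Theorem~\ref{wtc.thm.1plus} to structures carrying several partial orders, which is more delicate than the single-poset case. Everything else, and in particular the verification of $(W\triangle C)$, is elementary once one identifies the $2$-generated members of the class with up-sets of the template and uses the all-trivial $2$-element chain as the ``placeholder'' type.
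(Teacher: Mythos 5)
Your proposal is correct and follows essentially the same route as the paper: cite the Ramsey property of $\OPP^\fin_\calT$ from \cite{draganic-masul-mulitposets}, apply Theorem~\ref{wtc.thm.RP-OP}, and verify $(W\triangle C)$ with $\tau$ the two-element chain in which every $\sqsubseteq_i$ is trivial. You merely spell out the details (the up-set description of $S_2(\OPP^\fin_\calT)$ and the explicit three-element witness $\calD_\sigma$) that the paper dismisses as ``straightforward.''
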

\begin{proof}
  Let $\calT$ be a finite poset with $T = \{1, \ldots, n\}$.
  It was shown in~\cite{draganic-masul-mulitposets} that the class $\OPP^\fin_\calT$ has the Ramsey property,
  so Theorem~\ref{wtc.thm.RP-OP} implies that it suffices to show that
  the class $\OPP_\calT$ has the $(W \triangle C)$. But this is straightforward since
  for any $\Sigma \subseteq S_2(\OPP^\fin_\calT)$ we can always take~$\tau = (\{0, 1\}, \Boxed=, \ldots, \Boxed=, \Boxed<)$
  where $0 < 1$.
\end{proof}

Classes of structures having the Ramsey property and the ordering property are particularly intriguing
in the context of Kechris-Pestov-Todor\v cevi\'c correspondence~\cite{KPT}.
Let $G$ be a topological group. Its \emph{action} on $X$ is a mapping $\Boxed \cdot :  G \times X \to X$
such that $1 \cdot x = x$ and $g \cdot (f \cdot x) = (gf) \cdot x$.
We also say that $G$ \emph{acts} on $X$. A \emph{$G$-flow} is a continuous action of a topological group $G$
on a topological space $X$. A \emph{subflow} of a $G$-flow $\Boxed\cdot :  G \times X \to X$
is a continuous map $\Boxed* :  G \times Y \to Y$ where $Y \subseteq X$ is a closed subspace of $X$ and
$g * y = g \cdot y$ for all $g \in G$ and $y \in Y$.
A $G$-flow $G \times X \to X$ is \emph{minimal} if it has no proper closed subflows.
A $G$-flow $u :  G \times X \to X$ is \emph{universal}
if every compact minimal $G$-flow $G \times Z \to Z$ is a factor of~$u$.
It is a well-known fact that for a compact Hausdorff space $X$ there is, up to isomorphism of $G$-flows,
a unique universal minimal $G$-flow, usually denoted by $G \curvearrowright M(G)$.

A topological group $G$ is \emph{extremely amenable}
if every $G$-flow $\Boxed\cdot :  G \times X \to X$
on a compact Hausdroff space $X$ has a fixed point, that is, there is an $x_0 \in X$ such that $g \cdot x_0 = x_0$
for all $g \in G$. Since $\Sym(A)$, the group of all the permutations on a set $A$,
carries naturally the topology of pointwise convergence, permutation groups can
be thought of as topological groups. In \cite{KPT} the authors show the following.

\begin{THM} \cite[Theorem 4.7]{KPT}\label{kpt4.7}
  Let $G$ be a closed subgroup of $\Sym(F)$ for a countable set $F$. Then $G$
  is extremely amenable if and only if $G = \Aut(\calF)$ for a countable ultrahomogeneous structure $\calF$
  whose age has the Ramsey property.
\end{THM}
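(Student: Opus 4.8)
The plan is to route both implications through a single intrinsic property of the group $G$ --- Pestov's finite Ramsey property for topological groups --- and to prove separately that this property is equivalent to extreme amenability on the dynamical side and to (RP) of the age on the combinatorial side. Throughout, $\calF$ will denote any countable ultrahomogeneous structure with $\Aut(\calF) = G$; such an $\calF$ exists regardless of hypotheses, and in the ($\Leftarrow$) direction it is the given one. First I would dispose of the structural bookkeeping: since $G$ is a closed subgroup of $\Sym(F)$ with $F$ countable, take the canonical relational structure on $F$ whose relations are the orbits of $G$ on $F^r$ for each arity $r \ge 1$ (countably many, as $F$ is countable). By construction $\Aut(\calF) = G$, and $\calF$ is ultrahomogeneous because any isomorphism between finite substructures preserves all orbit-relations and so is the restriction of some $g \in G$; being relational, $\calF$ is locally finite and countable, hence the \Fraisse\ limit of $\KK = \Age(\calF)$. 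This already yields the ``$G = \Aut(\calF)$ for a countable ultrahomogeneous $\calF$'' clause, so the content of the theorem is the equivalence of extreme amenability of $G$ with (RP) of $\KK$.

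I then record the dictionary. The pointwise stabilizers $G_{(\bar a)}$ of finite tuples $\bar a \in F^{<\omega}$ form a neighbourhood basis of $\id$ consisting of open subgroups, and for a finite $\calA \le \calF$ enumerated by $\bar a$ the coset space $G/G_{(\bar a)}$ is $G$-equivariantly the orbit of $\bar a$, which by ultrahomogeneity is exactly the set of enumerated copies of $\calA$ in $\calF$, i.e.\ $\Emb(\calA, \calF)$. A pair $\calA \hookrightarrow \calB$ is thus encoded by a finite ``pattern'' of cosets. Using this dictionary I formulate the bridging property: $G$ has property $(\mathrm{R})$ if for every open subgroup $V$, every finite $F_0 \subseteq G$, every $k \ge 2$ and every coloring $c : G/V \to \{1,\dots,k\}$ there is $g \in G$ with $c$ constant on the translated pattern $g F_0 V / V$. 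Instantiated at $V = G_{(\bar a)}$ with the pattern coming from embeddings $\calA \hookrightarrow \calB$, $(\mathrm{R})$ says precisely: every finite coloring of $\binom{\calF}{\calA}$ admits a copy of $\calB$ inside which all copies of $\calA$ receive one color.

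A standard K\"onig/ultrafilter compactness argument over the \Fraisse\ limit converts this ``infinitary'' statement into the finite one --- the existence of $\calC \in \KK$ with $\calC \longrightarrow (\calB)^\calA_k$ --- and conversely; the only wrinkle is that a coloring of unordered copies pulls back to an $\Aut(\calA)$-invariant coloring of $\Emb(\calA, \calF)$, a harmless finite-to-one step. This establishes $(\mathrm{R}) \Leftrightarrow$ (RP) of $\KK$. Since $(\mathrm{R})$ refers to $G$ alone, this equivalence holds for \emph{every} ultrahomogeneous companion $\calF$ of $G$, which reconciles the two directions of the theorem: in ($\Leftarrow$) the given $\calF$ supplies $(\mathrm{R})$, and in ($\Rightarrow$) $(\mathrm{R})$ is fed into the canonical $\calF$.

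It remains to prove $(\mathrm{R}) \Leftrightarrow$ extreme amenability, which is the heart of the matter. For ($\Leftarrow$) I take the compact flow $X = \{1,\dots,k\}^{G/V}$ under left translation and the orbit closure $\overline{G \cdot c}$; extreme amenability gives a fixed point $\phi^*$, and since $G$ acts transitively on $G/V$ this $\phi^*$ must be \emph{constant}. Approximating $\phi^*$ by a translate $g_\lambda \cdot c$ on the \emph{finite} set $F_0 V/V$ forces $c$ to be constant on $g_\lambda^{-1} F_0 V/V$, which is $(\mathrm{R})$. For ($\Rightarrow$), given a continuous action on a compact Hausdorff $X$, I would manufacture almost fixed points: fixing a finite $F_0 \subseteq G$, a point $x_0$, and a finite open cover $\calU$ of $X$, continuity of $g \mapsto g^{-1} x_0$ lets me encode $\calU$ as a coloring $c : G/V \to \{1,\dots,k\}$ for a small enough open $V$; property $(\mathrm{R})$ then yields $g$ with all of $F_0 \cdot (g^{-1} x_0)$ lying in one cell of $\calU$, an $(F_0,\calU)$-almost fixed point. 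Running this over the net of pairs (finite set, finite cover) and invoking compactness of $X$ extracts a genuine fixed point. The hard part will be exactly this last equivalence: the transitivity subtlety that forces one to work inside the orbit closure rather than with a naive invariant coloring, and, in the converse, the delicate topological handling needed both to realize the cover by a \emph{uniformly continuous} (left-coset-invariant) coloring and to pass from almost fixed points to an honest fixed point by compactness.
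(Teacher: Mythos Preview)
The paper does not prove this statement at all: it is quoted from \cite{KPT} as background (note the citation in the theorem header) and is used only as a black box in the concluding discussion. There is therefore no ``paper's own proof'' to compare your proposal against.

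For what it is worth, your outline is essentially the original KPT argument: build the canonical orbit structure to realize $G$ as $\Aut(\calF)$, translate between open-subgroup cosets and embeddings of finite substructures, bridge via Pestov's finite oscillation/Ramsey property $(\mathrm{R})$ for groups, and prove $(\mathrm{R}) \Leftrightarrow$ extreme amenability by the color-space flow in one direction and an almost-fixed-point compactness argument in the other. One point you gloss over as ``a harmless finite-to-one step'' deserves a sentence: the paper's (RP) is phrased for \emph{substructures}, while $(\mathrm{R})$ colors \emph{embeddings}; the passage from the former to the latter needs rigidity of the members of $\KK$, which follows from (RP)$+$(JEP) (as the paper notes in the Introduction, citing \cite{Nesetril,masul-scow}). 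With that caveat your sketch is sound, but again, none of this appears in the present paper.
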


In case a closed permutation subgroup $G$ on a countable set is not extremely amenable,
\cite{KPT} provides us with a means to compute its universal minimal flow.
Let $\LO(A)$ be the set of all linear orders on $A$ and let $G$ be a closed subgroup of $\Sym(A)$.
The set $\LO(A)$ with the standard product topology is a compact Hausdorff
space and the action of $G$ on $\LO(A)$ given by $x \mathbin{<^g} y \text{ if and only if } g^{-1}(x) < g^{-1}(y)$ is continuous.
This action is usually referred to as the \emph{logical action of $G$ on $\LO(A)$}.

Note that if $\KK^*$ is a reasonable order expansion of $\KK$ and $\KK^*$ has (HP), resp.\ (JEP) or (AP), then
$\KK$ has (HP), resp.\ (JEP) or (AP) \cite{KPT}; consequently if $\KK^*$ is a \Fraisse\ age, then so is $\KK$.
Moreover, assume that $\KK^*$ is a \Fraisse\ age of $\Theta^*$-structures, let $\calF_* = (\calF, \Boxed<)$ be the \Fraisse\ limit
of $\KK^*$ and let $\KK = \restr{\KK^*}\Theta$. Then $\KK^*$ is a reasonable expansion of $\KK$ if
and only if $\KK$ is a \Fraisse\ age and $\calF$ is the \Fraisse\ limit of $\KK$~\cite{KPT}.

\begin{THM} \cite[Theorem 10.8]{KPT}\label{cerp.thm.KPT2}
  Let $\KK^*$ be a \Fraisse\ age which is a reasonable order expansion of a \Fraisse\ age $\KK$.
  Let $\calF$ be the \Fraisse\ limit of $\KK$, let $\calF^* = (\calF, \Boxed\sqsubset)$ be the \Fraisse\ limit of $\KK^*$,
  let $G = \Aut(\calF)$ and $X^* = \overline{G \cdot \Boxed\sqsubset}$ (in the logical action of $G$ on $\LO(F)$).
  Then the logical action of $G$ on $X^*$ is the universal minimal flow of $G$ if and only if
  $\KK^*$ has the Ramsey property and the ordering property.
\end{THM}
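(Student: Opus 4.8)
The plan is to translate both the Ramsey property and the ordering property of $\KK^*$ into purely dynamical statements about the flow $G \curvearrowright X^*$, and then to characterize intrinsically when a minimal flow of this special shape is the universal minimal flow. Throughout, write $G^* = \Aut(\calF^*) = \Aut(\calF, \sqsubset)$. Since we use the logical action, $G^*$ is exactly the stabilizer $\{g \in G : g \cdot \sqsubset = \sqsubset\}$ of the distinguished order, so the orbit $G \cdot \sqsubset$ is $G$-equivariantly homeomorphic to $G/G^*$ and $X^* = \overline{G \cdot \sqsubset}$ is its closure inside the compact space $\LO(F)$. Because $\KK^*$ is a reasonable \Fraisse\ order expansion of the \Fraisse\ age $\KK$, the fact recalled immediately before the theorem guarantees that $\calF$ is the \Fraisse\ limit of $\KK$ and that $X^*$ is precisely the set of orders $\sqsubset'$ on $F$ with $\Age(\calF, \sqsubset') \subseteq \KK^*$; this identification is what lets combinatorial statements about $\KK^*$ be read off from the orbit structure of $X^*$.

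Next I would record two reductions. First, $\KK^*$ has the Ramsey property if and only if $G^*$ is extremely amenable: this is Theorem~\ref{kpt4.7} applied to the ultrahomogeneous structure $\calF^*$, whose age is $\KK^*$. Second, the flow $G \curvearrowright X^*$ is minimal if and only if $\KK^*$ has the ordering property. The forward direction of this second equivalence is a soft unwinding: a basic neighbourhood of a point of $\LO(F)$ constrains only finitely many pairs, so density of every orbit in $X^*$ says precisely that for every finite $\calA \le \calF$ and every admissible order on $A$ some $g \in G$ carries it into agreement with $\sqsubset$ on a prescribed finite piece, and ultrahomogeneity of $\calF$ converts this into the embedding statement of (OP). The converse uses (OP) together with ultrahomogeneity to move an arbitrary point of $X^*$ arbitrarily close to $\sqsubset$, so that the orbit of $\sqsubset$ — hence, by density of that orbit, every orbit — is dense. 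I would isolate this minimality criterion as a lemma.

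Granting the two reductions, it remains to prove that $G \curvearrowright X^*$ is the universal minimal flow if and only if $G^*$ is extremely amenable and $X^*$ is minimal. For the implication from extreme amenability plus minimality, take any minimal $G$-flow $Z$; extreme amenability of $G^*$ produces a $G^*$-fixed point $z_0 \in Z$, and then $g \cdot \sqsubset \mapsto g \cdot z_0$ is well defined on the orbit (if $g_1 \cdot \sqsubset = g_2 \cdot \sqsubset$ then $g_2^{-1} g_1 \in G^*$ fixes $z_0$), $G$-equivariant and continuous, so it extends to a $G$-map $X^* \to \overline{G \cdot z_0} = Z$, which is onto because $Z$ is minimal. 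Thus the minimal flow $X^*$ factors onto every minimal $G$-flow, so by the defining universal property $X^* \cong M(G)$.

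For the converse, suppose $G \curvearrowright X^*$ is the universal minimal flow. It is in particular minimal, so (OP) holds by the minimality criterion, and only the Ramsey property — equivalently, extreme amenability of $G^*$ — remains. Here I would invoke the co-induction construction, available because $X^* \cong G/G^*$ is precompact (each finite structure carries only finitely many admissible orders, so $X^*$ is compact). Given an arbitrary compact $G^*$-flow $Y$, form the compact $G$-flow $Z$ of continuous $G^*$-equivariant maps $G \to Y$ under left translation, and pick a minimal subflow $Z_0 \subseteq Z$; universality of $X^* = M(G)$ gives a $G$-map $\phi : X^* \to Z_0$. Since $\sqsubset$ is $G^*$-fixed, $f_0 := \phi(\sqsubset)$ is a $G^*$-fixed point of $Z$, and unwinding the defining equivariance $f_0(gh) = h^{-1}\cdot f_0(g)$ shows that $f_0(e) \in Y$ is fixed by all of $G^*$; thus every compact $G^*$-flow has a fixed point and $G^*$ is extremely amenable. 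Combining the two directions with the two reductions yields the theorem. I expect this converse to be the main obstacle: realizing the co-induced flow as a genuine \emph{compact} $G$-flow depends essentially on the precompactness of $X^*$, the descent of a $G^*$-fixed point through the co-induction must be tracked with care, and the minimality criterion itself — specifically the translation between topological density of orbits and the combinatorial content of (OP) — is the other delicate point.
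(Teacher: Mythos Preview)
The paper does not contain a proof of this statement: it is quoted verbatim as \cite[Theorem~10.8]{KPT} and used as a black box to derive the two corollaries that follow. There is therefore no ``paper's own proof'' to compare your proposal against.

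That said, your outline is essentially the argument of the original Kechris--Pestov--Todor\v cevi\'c paper: the two reductions you isolate (Ramsey property $\Leftrightarrow$ extreme amenability of $G^* = \Aut(\calF^*)$, and ordering property $\Leftrightarrow$ minimality of $G \curvearrowright X^*$) are exactly their Theorem~4.7 and Theorem~7.4/7.5, and the universality step via the orbit map $g \cdot \sqsubset \mapsto g \cdot z_0$ is their standard route. One point deserves more care than you give it: the map $g \cdot \sqsubset \mapsto g \cdot z_0$ is defined only on the dense orbit $G/G^*$, and asserting that it ``extends to a $G$-map $X^* \to Z$'' hides the real work. You need that $X^*$ is the (left) completion of $G/G^*$ for the uniformity inherited from $G$, and that the orbit map $G \to Z$ is right-uniformly continuous, so that it descends and extends; this is where precompactness of $G^*$ in $G$ (equivalently, compactness of $X^*$) is genuinely used. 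Your co-induction sketch for the converse is also correct in spirit, though in \cite{KPT} the argument is organized somewhat differently.
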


We can now present an infinite family of topological groups whose universal minimal flows can be computed using
the Kechris-Pestov-Todor\v cevi\'c correspondence.

\begin{COR}
  Let $\calT$ be a finite poset with $T = \{1, \ldots, n\}$.
  Let $\calP_\calT$ be the \Fraisse\ limit of $\PP^\fin_\calT$, let $\overrightarrow\calP_\calT = (\calP_\calT, \Boxed\sqsubset)$ be the \Fraisse\ limit of
  $\OPP^\fin_\calT$, let $G = \Aut(\calP_\calT)$ and $X^* = \overline{G \cdot \Boxed\sqsubset}$ (in the logical action of $G$ on $\LO(P_\calT)$).
  Then the logical action of $G$ on $X^*$ is the universal minimal flow of $G$.
\end{COR}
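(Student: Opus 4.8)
The plan is to derive this straight from Theorem~\ref{cerp.thm.KPT2} (the Kechris-Pestov-Todor\v cevi\'c criterion), so the whole argument reduces to verifying its hypotheses for $\KK^* = \OPP^\fin_\calT$ and $\KK = \PP^\fin_\calT$. Two of the four ingredients are already on the table: $\OPP^\fin_\calT$ has the Ramsey property by~\cite{draganic-masul-mulitposets} (this is what is invoked in the proof of Corollary~\ref{wtc.cor.PT}), and it has the ordering property by Corollary~\ref{wtc.cor.PT} itself. What remains is to show that $\OPP^\fin_\calT$ is a \Fraisse\ age and a \emph{reasonable} order expansion of $\PP^\fin_\calT$; once this is done, the remarks preceding Theorem~\ref{cerp.thm.KPT2} automatically give that $\PP^\fin_\calT$ is a \Fraisse\ age and that its \Fraisse\ limit is the $\Theta$-reduct of the \Fraisse\ limit of $\OPP^\fin_\calT$, i.e.\ that $\calP_\calT$ and $\overrightarrow\calP_\calT = (\calP_\calT, \Boxed\sqsubset)$ are indeed as named in the statement, and Theorem~\ref{cerp.thm.KPT2} then yields the conclusion.

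That $\OPP^\fin_\calT$ is a \Fraisse\ age is the routine part: it is an abstract class with countably many isomorphism types, it obviously has (HP), and it has (JEP) because the disjoint union of two members, with the $n$ partial orders taken componentwise (which again conform to $\calT$) and the two linear orders concatenated (which is a common linear extension of all the $\sqsubseteq_i$), is again a member of $\OPP^\fin_\calT$; since $\OPP^\fin_\calT$ also has (RP) it has (AP) by~\cite{Nesetril}.

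The delicate point, and the step I expect to carry the weight of the proof, is reasonableness of the expansion. One first notes that $\restr{\OPP^\fin_\calT}{\Theta} = \PP^\fin_\calT$, since every consistent structure admits at least one linear extension. Then, given $\calA, \calB \in \PP^\fin_\calT$, an embedding $f : \calA \hookrightarrow \calB$, and a linear order $<$ on $A$ with $(\calA, \Boxed<) \in \OPP^\fin_\calT$, one must produce a linear order $\sqsubset$ on $B$ that extends every $\sqsubseteq_i^\calB$ and along which $f$ is order preserving. The natural attempt is to let $\sqsubset$ be any linear extension (by Szpilrajn's theorem) of the transitive closure $R$ of $\big(\bigcup_{i=1}^{n} \sqsubseteq_i^\calB\big) \cup \{(f(a), f(a')) : a < a'\}$, which succeeds as soon as $R$ is antisymmetric. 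For a single poset (i.e.\ $\calT$ the one-point template, $\OPP_\calT = \OPP$) antisymmetry is immediate, because a poset embedding reflects the order, so the only relations the partial order of $\calB$ forces among the elements of $f(A)$ are images of relations of $\calA$, which $<$ respects; a strict $R$-cycle would then project onto a strict $<$-cycle in $A$. For a general template one has to carry out the analogous check when several partial orders interact and $f(A)$ need not be closed under their combined order, and I expect this order-merging lemma to be the main obstacle; once it is in place one simply invokes Theorem~\ref{cerp.thm.KPT2} with $G = \Aut(\calP_\calT)$ and $X^* = \overline{G \cdot \Boxed\sqsubset}$, the Ramsey and ordering properties of $\OPP^\fin_\calT$ being exactly the two conditions it requires.
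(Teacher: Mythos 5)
Your overall route coincides with the paper's: its proof of this corollary consists precisely of the observation that $\PP_\calT$ and $\OPP_\calT$ are \Fraisse\ ages (stated as ``easy to see''), the Ramsey property from \cite{draganic-masul-mulitposets}, the ordering property from Corollary~\ref{wtc.cor.PT}, and an appeal to Theorem~\ref{cerp.thm.KPT2}. Your verification of (HP), (JEP) and (AP) for $\OPP^\fin_\calT$ is fine, and you are right that reasonableness of the expansion is the hypothesis that actually carries weight and that the paper does not spell out.

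The gap sits exactly where you feared: the ``order-merging lemma'' is not only left unproved in your write-up, but with the definitions as given in this paper it fails for general templates. Take $\calT$ to be the two-element antichain, let $\calA$ have universe $\{a,a'\}$ with $\sqsubseteq_1^\calA$ and $\sqsubseteq_2^\calA$ both trivial and with $a' < a$, and let $\calB$ have universe $\{a,a',c\}$ whose only nontrivial relations are $a \mathrel{\sqsubset_1} c$ and $c \mathrel{\sqsubset_2} a'$. Then $\calB$ conforms to $\calT$ and is consistent (the order $a,c,a'$ extends both relations), and the inclusion $\calA \hookrightarrow \calB$ is an embedding of multiposets; yet every linear order extending both $\sqsubseteq_1^\calB$ and $\sqsubseteq_2^\calB$ must place $a$ before $c$ and $c$ before $a'$, hence $a$ before $a'$, so no $(\calB, \Boxed\sqsubset) \in \OPP^\fin_\calT$ makes the inclusion an embedding of $(\calA, \Boxed<)$. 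In terms of your construction, the relation $R$ fails antisymmetry because a chain from $f(a)$ to $f(a')$ may alternate between different $\sqsubseteq_i$'s and pass outside $f(A)$, and such a composite is reflected by $f$ only when all steps lie in a single $\sqsubseteq_i$. Your argument does go through whenever $\calT$ has a greatest element (the top order then absorbs every mixed chain, reducing matters to your $n=1$ case), but for arbitrary templates reasonableness cannot be obtained by this Szpilrajn-type merging; the paper itself sidesteps the verification by deferring the structural facts about $\OPP_\calT$ to \cite{draganic-masul-mulitposets}, so to close your proof you would need either to restrict the admissible templates or to import the precise form of these facts from that reference rather than re-derive them as you attempted.
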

\begin{proof}
  It is easy to see that for every template $\calT$ with $T = \{1, \ldots, n\}$ both $\PP_\calT$ and $\OPP_\calT$ are \Fraisse\ ages.
  It was shown in~\cite{draganic-masul-mulitposets} that the class $\OPP^\fin_\calT$ has the Ramsey property, while
  Corollary~\ref{wtc.cor.PT} establishes the ordering property for the class.
  The rest is now an immediate consequence of Theorem~\ref{cerp.thm.KPT2}.
\end{proof}

Let us conclude the paper with another
immediate consequence of Theorems~\ref{cerp.thm.KPT2} and~\ref{wtc.thm.RP-OP}:

\begin{COR}
  Let $\KK^*$ be a \Fraisse\ age satisfying the weak triangle condition
  which is a reasonable order expansion of a \Fraisse\ age $\KK$.
  Let $\calF$ be the \Fraisse\ limit of $\KK$, let $\calF^* = (\calF, \Boxed\sqsubset)$ be the \Fraisse\ limit of $\KK^*$,
  let $G = \Aut(\calF)$ and $X^* = \overline{G \cdot \Boxed\sqsubset}$ (in the logical action of $G$ on $\LO(F)$).
  Then the logical action of $G$ on $X^*$ is the universal minimal flow of $G$ if and only if
  $\KK^*$ has the Ramsey property.
\end{COR}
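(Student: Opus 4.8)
The plan is to reduce the statement to the combination of Theorem~\ref{cerp.thm.KPT2} and Theorem~\ref{wtc.thm.RP-OP}. First I would invoke Theorem~\ref{cerp.thm.KPT2}: since $\KK^*$ is a \Fraisse\ age which is a reasonable order expansion of the \Fraisse\ age $\KK$, and $\calF$, $\calF^*$, $G$, $X^*$ are precisely as in the hypotheses of that theorem, it tells us that the logical action of $G$ on $X^*$ is the universal minimal flow of $G$ if and only if $\KK^*$ has \emph{both} the Ramsey property and the ordering property. So it remains to show that, under the standing hypotheses, $\KK^*$ has the Ramsey property if and only if it has the Ramsey property and the ordering property.

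The only nontrivial implication is that the Ramsey property alone already forces the ordering property here. For this I would observe that $\KK^*$, being a \Fraisse\ age, has (HP) and (JEP); hence if it additionally has (RP) then it is a Ramsey age in the sense of Section~\ref{wtc.sec.prelim}. Since $\KK^*$ is also assumed to satisfy the weak triangle condition $(W\triangle C)$, Theorem~\ref{wtc.thm.RP-OP} applies directly and yields that $\KK^*$ has the ordering property. The converse implication, that the Ramsey property together with the ordering property trivially implies the Ramsey property, needs no argument. Combining this equivalence with the one supplied by Theorem~\ref{cerp.thm.KPT2} gives the desired biconditional.

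There is essentially no obstacle to overcome: the corollary is a purely formal consequence of the two cited theorems, the single nonautomatic step being the remark that a \Fraisse\ age satisfying (RP) is automatically a Ramsey age because it already carries (HP) and (JEP). The only point requiring a moment's care is to note that all the structural prerequisites of Theorems~\ref{cerp.thm.KPT2} and~\ref{wtc.thm.RP-OP} — namely that $\KK^*$ is a \Fraisse\ age, that it is a reasonable order expansion of the \Fraisse\ age $\KK$ with $\calF$ the \Fraisse\ limit of $\KK$ and $\calF^*$ that of $\KK^*$, and that $\KK^*$ has $(W\triangle C)$ — are exactly the hypotheses of the corollary, so no further verification is needed before applying either theorem.
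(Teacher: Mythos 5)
Your proposal is correct and is exactly the argument the paper intends: the corollary is stated as an immediate consequence of Theorems~\ref{cerp.thm.KPT2} and~\ref{wtc.thm.RP-OP}, and your reduction (noting that a \Fraisse\ age with (RP) is a Ramsey age, so $(W\triangle C)$ plus (RP) yields (OP) via Theorem~\ref{wtc.thm.RP-OP}, collapsing the conjunction in Theorem~\ref{cerp.thm.KPT2} to (RP) alone) is precisely the intended route.
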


\section{Acknowledgements}

The author gratefully acknowledges the support of the Ministry of Science, Education and Technological Development of the Republic
of Serbia, Grant No.\ 174019.


\begin{thebibliography}{99}
\bibitem{AH}
  F.\ G.\ Abramson, L.\ A.\ Harrington.
  Models without indiscernibles.
  J.~Symbolic Logic 43 (1978), 572--600.

\bibitem{day-jezek}
  A.\ Day, J.\ Je\v zek.
  The amalgamation property for varieties of lattices.
  Trans.\ Amer.\ Math.\ Soc.\ 286 (1984), 251--256.

\bibitem{draganic-masul-mulitposets}
  N.\ Dragani\'c, D.\ Ma\v sulovi\'c.
  A Ramsey Theorem for Multiposets.
  preprint arXiv:1705.11090

\bibitem{fouche}
  W.\ L.\ Fouch\'e.
  Symmetry and the Ramsey degree of posets.
  15th British Combinatorial Conference (Stirling, 1995). Discrete Math.\ 167/168 (1997), 309--315.

\bibitem{Fraisse1}
  R. Fra\"{\i}ss\'{e}.
  Sur certains relations qui g\'{e}n\'{e}ralisent l'ordre des nombres rationnels.
  C. R. Acad. Sci. Paris 237(1953), 540--542.

\bibitem{Fraisse2}
  R. Fra\"{\i}ss\'{e}.
  Sur l'extension aux relations de quelques propri\'et\'es des ordres.
  Ann. Sci. \'Ecole Norm. Sup. 71(1954), 363--388.

\bibitem{GR}
  R.\ L.\ Graham, B.\ L.\ Rothschild.
  Ramsey's theorem for n-parameter sets.
  Tran.\ Amer.\ Math.\ Soc.\ 159 (1971), 257--292.

\bibitem{gratzer-F}
  G.\ Gr\"atzer.
  Lattice Theory: Foundation.
  Birkh\"auser~2011.

\bibitem{hodges}
  W.\ Hodges. Model Theory.
  Cambridge University Press 1993

\bibitem{Jipsen-Rose}
  P.\ Jipsen, H.\ Rose.
  Varieties of lattices.
  Lecture Notes in Mathematics, Springer 1992.

\bibitem{KPT}
  A.\ S.\ Kechris, V.\ G.\ Pestov, S.\ Todor\v cevi\'c.
  \Fraisse\ limits, Ramsey theory and topological dynamics of automorphism groups.
  GAFA Geometric and Functional Analysis, 15 (2005) 106--189.

\bibitem{kechris-sokic}
  A.\ Kechris, M.\ Soki\'c.
  Dynamical properties of the automorphism groups of the random poset and random distributive lattice.
  Fund.\ Math.\ 218 (2012), 69--94.

\bibitem{masul-rpopos}
  D.\ Ma\v sulovi\'c.
  Pre-adjunctions and the Ramsey property.
	(accepted for publication in European Journal of Combinatorics)

\bibitem{masul-scow}
  D.\ Ma\v sulovi\'c, L.\ Scow.
  Categorical equivalence and the Ramsey property for finite powers of a primal algebra.
  Algebra Universalis 78 (2017), 159--179.


\bibitem{N1995}
  J.\ Ne\v set\v ril.
  Ramsey theory. In: R.\ L.\ Graham, M.\ Gr\"otschel and L.\ Lov\'asz (eds), Handbook of Combinatorics, Vol.~2,
  1331--1403, MIT Press, Cambridge, MA, USA, 1995.

\bibitem{Nesetril}
  J.\ Ne\v set\v ril.
  Ramsey classes and homogeneous structures.
  Combinatorics, probability and computing, 14 (2005) 171--189.

\bibitem{Nesetril-metric}
  J.\ Ne\v set\v ril.
  Metric spaces are Ramsey.
  European Journal of Combinatorics 28 (2007), 457--468.

\bibitem{Nesetril-Rodl-1975}
  J.\ Ne\v set\v ril, V.\ R\"odl.
  Partitions of subgraphs.
  in: M.\ Fiedler (ed), Recent Advances in Graph Theory, 405--412, Academia, Prague, 1975.

\bibitem{Nesetril-Rodl}
  J.\ Ne\v set\v ril, V.\ R\"odl.
  Partitions of finite relational and set systems.
  J.\ Combin.\ Theory Ser.\ A 22 (1977), 289--312.

\bibitem{Nesetril-Rodl-OP-GRA}
  J.\ Ne\v set\v ril, V.\ R\"odl.
  On a probabilistic graph-theoretical method.
  Proc.\ Amer.\ Math.\ Soc.\ 72 (1978), 417--421.

\bibitem{Nesetril-Rodl-lattices}
  J.\ Ne\v set\v ril, V.\ R\"odl.
  Combinatorial partitions of finite posets and lattices -- Ramsey lattices.
  Algebra Universalis 19 (1984), 106--119.

\bibitem{PTW}
  M.\ Paoli, W.\ T.\ Trotter Jr., J.\ W.\ Walker.
  Graphs and orders in Ramsey theory and in dimension theory.
  Graphs and order (Banff, Alta., 1984), 351--394, NATO Adv.\ Sci.\ Inst.\ Ser.\ C Math.\ Phys.\ Sci., 147,
  Reidel, Dordrecht, 1985.

\bibitem{Promel-Voigt}
  H.\ J.\ Pr\"omel, B.\ Voigt.
  Recent results in partition (Ramsey) theory for finite lattices.
  Discr.\ Math.\ 35 (1981), 185--198.

\bibitem{Ramsey}
  F.\ P.\ Ramsey.
  On a problem of formal logic.
  Proc.\ London Math.\ Soc.\ 30 (1930), 264--286.

\bibitem{sokic}
  M.\ Soki\'c.
  Ramsey Properties of Finite Posets.
  Order, 29 (2012) 1--30.

\bibitem{sokic-semilat}
  M.\ Soki\'c.
  Semilattices and the Ramsey property.
  The Journal of Symbolic Logic, 80 (2015), 1236--1259. 
\end{thebibliography}
\end{document}